\newcommand\figscale{0.45}
\theoremstyle{plain}
\newtheorem{theorem}{Theorem}
\newtheorem{lemma}[theorem]{Lemma}
\newtheorem{corollary}[theorem]{Corollary}
\newtheorem{proposition}[theorem]{Proposition}
\theoremstyle{definition}
\newtheorem{algorithm}{Algorithm}
\theoremstyle{remark}
\newtheorem{remark}[theorem]{Remark}
\newcommand*{\R}{\mathbb{R}}
\newcommand*{\Z}{\mathbb{Z}}
\newcommand*{\eps}{\varepsilon}
\newcommand*{\doo}{\partial}
\newcommand*{\ol}[1]{\overline{#1}}
\newcommand{\sulut}[1]{\left( #1 \right)}
\newcommand{\joukko}[1]{\left\{ #1 \right\}}
\newcommand{\abs}[1]{\left\lvert #1 \right\rvert}
\newcommand{\norm}[1]{\left\| #1 \right\|}
\newcommand{\der}{\mathrm{d}}
\DeclareMathOperator{\dive}{div}
\DeclareMathOperator{\real}{Re}
\title{Boundary determination for hybrid imaging from a single measurement}
\author{Tommi Brander \\ tommi.brander@ntnu.no}
\affil{Norwegian University of Science and Technology, Department of Mathematical Sciences}
\affil{Technical University of Denmark, Department of Applied Mathematics and Computer Science}
\author{Torbjørn Ringholm \\ 	ringholm@gmail.com}
\affil{Norwegian University of Science and Technology, Department of Mathematical Sciences}
\begin{document}

\maketitle

\begin{abstract}
We recover the conductivity~$\sigma$ at the boundary of a domain from a combination of interior and boundary data, with a single quite arbitrary measurement, in AET or CDII.
The argument is elementary and local.
More generally, we consider the variable exponent $p(\cdot)$-Laplacian as a forward model with the  interior data $\sigma |\nabla u|^q$,  and find out that single measurement specifies the boundary conductivity when $p- q \ge 1$, and otherwise the measurement specifies two alternatives.
We present heuristics for selecting between these alternatives.
Both $p$ and $q$ may depend on the spatial variable $x$, but they are assumed to be a priori known.
We illustrate the practical situations with numerical examples.
\end{abstract}

\paragraph{MSC} primary 65N21; secondary 35J92, 35J67, 35R30
\paragraph{Keywords} coupled physics imaging, boundary determination, variable exponent, $p$-Laplace equation, AET, UMEIT, CDII, MREIT

%






\section{Introduction}

Calder\'on's problem~\cite{Calderon:1980} asks if the electric conductivity $\sigma$ in an object~$\Omega$ can be reconstructed from boundary measurements of current and voltage given by the Dirichlet-to-Neumann map (DN~map) $u|_{\doo \Omega} \to \sigma \nabla u \cdot \nu |_{\doo \Omega}$, where $\nu$ is the unit outer normal.
Positive results often use an infinite number of measurements (but not always, see e.g.~\cite{Kang:Seo:2000,Kar:Wang}), even for boundary determination where one only wants to know the conductivity at the boundary~$\doo \Omega$.
Instead of an infinite number of measurements, one can think of many methods as using boundary values with infinitely large and focused oscillations.

Hybrid or coupled physics imaging methods combine multiple physical processes to acquire interior data about the object and use the interior data to determine its physical properties.
This article is concerned with acousto-electric tomography (AET), which is also known as ultrasound mediated electrical impedance tomography (UMEIT), and with current density impedance imaging (CDII).
These imaging modalities provide pointwise interior data of the form $H = \sigma \abs{\nabla u}^q$ for the exponents $q = 1$ (CDII) and $q = 2$ (AET/UMEIT); also, magnetic resonance electrical impedance tomography (MREIT) has been researched as giving the information corresponding to $q=1$. \cite{Bal:2013,Kuchment:Steinhauer:2012,Kwon:Woo:Yoon:Seo:2002}

Our main observation is that any measurement of Dirichlet data~$D$, Neumann data~$N$ and interior data~$H$  (corresponding to a non-constant, bounded solution~$u$) already contains a fair amount of information on the conductivity on the boundary.
We provide an algorithm for boundary reconstruction in dimension two from any such combination of data for AET and CDII.
The main theorem is theorem~\ref{thm:boundary_determination}, which gives conditions on $\nabla u$ and other parameters at a given boundary point~$x_0$ that determine whether there exists a unique conductivity $\sigma(x_0)$ that can be recovered, whether there exist two conductivity candidates $\sigma_\pm$ among which we have to select the correct one (some conditions for this are provided in section~\ref{sec:select}), or whether $\sigma(x_0)$ remains completely unknown at the particular point.
Our theoretical results cover a wider range of non-linear equations, also with a variable exponent in the interior data.
We give a broader explanation of the main theorem and background for boundary determination in the following subsection~\ref{sec:intro_determination}.

In corollaries~\ref{cor:p2q1} and \ref{cor:pq_2} we give explicit reconstruction formulas for the conductivity in the cases of AET/UMEIT and CDII.
In section~\ref{sec:numeric} we provide reconstruction algorithms and use simulated data for reconstructions.

\subsection{Boundary determination with interior data}
\label{sec:intro_determination}
Alberti and Capdeboscq provide an introduction to the mathematics of hybrid data imaging in~\cite{Alberti:Capdeboscq:2018}.
With interior data of power density type it is possible to reconstruct the conductivity with a small number of suitably chosen measurements~\cite{Bal:2013}.
In the present article we investigate what we can say from a single measurement with arbitrary boundary values.
It turns out that it is possible to recover the conductivity uniquely or almost uniquely at the boundary of the domain, assuming sufficient regularity from the conductivity and the boundary.
Our argument is elementary, local and similar to a boundary determination argument for the $p$-Laplacian~\cite[lemma 4.2]{Brander:2016:jan}.

The question of boundary determination is relevant since some results for hybrid inverse problems assume the conductivity is known on or close to the boundary~\cite{Bal:2013:aug,Capdeboscq:Fehrenbach:deGournay:Kavian:2009,Nachman:Tamasan:Timonov:2011}.
Boundary determination could also be used to calibrate a measurement device or measure errors in the devices.
Our boundary determination algorithm can use data of arbitrary measurements and is not computationally demanding, which suggests that it can be added to any other reconstruction method as a verification step or to improve the reconstruction at boundary.

Our method works generally for power densities $\sigma \abs{\nabla u}^{q(x)}$ with arbitrary and varying power $q(x) \ge 0$, though the case $q = 0$ is trivial.
We omit the physics of the hybrid data imaging from the present paper and instead refer to the book of Alberti and Capdeboscq~\cite[section 1.2]{Alberti:Capdeboscq:2018}.
We note that only the powers $q \equiv 1$ and $q \equiv 2$ are relevant for presently known applications and they come from very different physical processes, so even interpolation or variation of the parameters is not feasible in an obvious manner.
Furthermore, our method works when the forward model is the non-Ohmic $p(\cdot)$-conductivity equation, where $1 < p^- < p(x) < p^+ < \infty$ and
\begin{equation}
\dive \sulut{\sigma(x) \abs{\nabla u}^{p(x)-2}\nabla u} = 0,
\end{equation}
where the case $p \equiv 2$ is the usual linear and Ohmic conductivity equation
\begin{equation}
\dive \sulut{\sigma(x) \nabla u} = 0.
\label{eq:linear_cond_eqn}
\end{equation}
Physically, the conductivity equation follows from Ohm's law
\begin{equation}
I = \sigma \nabla u,
\end{equation}
where $I$ is the electric current and $u$ is the electric potential (voltage),
and from Kirchhoff's law
\begin{equation}
\dive I = 0.
\end{equation}
Since Ohm's law is an approximation based on empirical data, and the current-voltage characteristic is in general a complicated non-linear one (and might not even be a function), it is of interest to consider more general non-linear Ohm's laws.
In the present work we consider a power-law type Ohm's law, where the type of the power law relation may vary spatially; namely,
\begin{equation}
I = \sigma \abs{\nabla u}^{p(x)-2} \nabla u.
\end{equation}
This leads to the variable exponent $p(\cdot)$-Laplace equation.
An example of a power-law type Ohm's law is certain polycrystalline compounds near the transition to superconductivity~\cite{Bueno:Longo:Varela:2008,Dubson:Herbert:Calabrese:Harris:Patton:Garland:1988}, where the exponent~$p$ is a function of temperature.

Calder\'on's problem for the nonlinear model with constant $p$ was introduced by Salo and Zhong~\cite{Salo:Zhong:2012}; for a review, see the thesis~\cite{Brander:2016:apr}.
The known boundary determination results for the $p$-Laplacian use an arbitrarily large parameter which causes the solutions to oscillate~\cite{Brander:2016:jan,Brander:Harrach:Kar:Salo:2018,Salo:Zhong:2012}.
The variable exponent equation has been investigated in one dimension~\cite{Brander:Winterrose}, where non-injectivity of the exponent~$p$ provides the only obstacle to recovering the conductivity from the DN~map.
In one dimension, in addition to the previous result, interior data is sufficient to solve Calderón's problem at all points~$x$ where $p(x) - q(x) \neq 1$~\cite[remark 10]{Brander:Winterrose}, a condition that also plays a role in this paper.

Investigating the inverse problem with the parameters $p$ and $q$ reveals curious properties; see section~\ref{sec:inverse} for proofs.
\begin{enumerate}
\item
At boundary points $x$ where $p(x) - q(x) > 1$, the conductivity $\sigma(x)$ can be recovered if $\nabla u (x) \neq 0$.
\item
Where $p(x) - q(x) = 1$, the conductivity $\sigma(x)$ can be recovered if the component of $\nabla u(x)$ that is tangent to the boundary does not vanish.
When it does vanish, the interior data and the Neumann data are equal and nothing can be deduced about the conductivity.
\item
Where $p(x) - q(x) < 1$, two candidates for the conductivity can be recovered at all points where both the tangential and the normal components of $\nabla u(x)$ are nonzero. If $\nabla u(x) \neq 0$ and either its tangential or normal component vanishes, then the conductivity can be recovered uniquely.
The two candidates are equal if and only if the absolute values of $\nabla u(x) \cdot \nu(x)$ and the component of $\nabla u(x)$ tangent to the boundary have a specific relationship, which depends on the value of~$(p-q)(x)$.
We present some situations where the correct candidate can be selected in section~\ref{sec:select}.
\end{enumerate}
In fact, if we consider the problem as a $p(\cdot)$-Laplace equation with the interior data~$H = \sigma \abs{\nabla u}^{q(x)}$ as a weight function, we notice that
\begin{equation}
\dive\sulut{\sigma(x) \abs{\nabla u}^{p(x)-2} \nabla u}
= \dive\sulut{H(x) \abs{\nabla u}^{p(x)-q(x)-2} \nabla u}.
\end{equation}
Such an equation is elliptic if $p(x)-q(x) > C > 1$ everywhere and hyperbolic if $p(x)-q(x) < c < 1$ everywhere, with the case $p(x) -q(x) = 1$ being degenerate elliptic~\cite[section 4]{Bal:2013:aug}\cite[theorem 3.2]{Bal:Hoffman:Knudsen:2017}.

\subsection*{Acknowledgements}
T.B.\ was partially funded by grant no.\ 4002-00123 from the Danish Council for Independent Research | Natural Sciences, and partially by the Research Council of Norway through the FRIPRO Toppforsk project ''Waves and nonlinear phenomena''.
We would like to thank Changyou Guo for discussions and early numerical results.

\section{Forward problem}
\label{sec:forward}
Let $1 < p(x) < \infty$ and suppose $p \colon \Omega \to \R$ is a measurable function, with $\Omega \subset \R^d$ with $d \geq 2$.
(For $d=1$ we refer to the work of Brander and Winterrose~\cite{Brander:Winterrose}.)
We first discuss the existence and uniqueness of the weighted variable exponent equation and after that state a regularity result.

Before proceeding, we define the variable exponent Lebesgue space $L^p(\Omega$), with $\Omega \subset \R^d$ a bounded open set and $d \geq 1$.
The variable exponent Sobolev spaces are defined in terms of $L^p(\Omega)$ in the usual way.
Following the book of Diening, Harjulehto, Hästö and R\r{u}\v{z}i\v{c}ka~\cite[sections 2 and 3]{Diening:Harjulehto:Hasto:Ruzicka:2011},
\begin{equation}
    L^p(\Omega) = \joukko{f \colon \Omega \to \R \text{ measurable } ; \lim_{\lambda \to 0} \int_\Omega \abs{\lambda f(x)}^{p(x)} \der x = 0 },
\end{equation}
where functions which agree almost everywhere are considered identical, and
\begin{equation}
\norm{f}_{L^p(\Omega)} = \inf \joukko{\lambda > 0 ; \int_\Omega \abs{\frac{f(x)}{\lambda}}^{p(x)} \der x \le 1}.
\end{equation}
These correspond to the classical Lebesgue spaces and norms if $p$ is constant~\cite[example~2.1.8]{Diening:Harjulehto:Hasto:Ruzicka:2011}.

The Dirichlet problem for the varying exponent $p(\cdot)$-Laplacian is
\begin{align}
\dive \sulut{ \sigma \abs{\nabla u}^{p(x)-2} \nabla u } = 0 &\text{ in } \Omega \\
u = f &\text{ on } \doo \Omega.
\end{align}
The equation is the Euler-Lagrange equation of the energy
\begin{equation}
\label{eq:normal_energy}
v \mapsto \int_\Omega \frac{\sigma}{p(x)} \abs{\nabla v}^{p(x)} \der x.
\end{equation}
We assume that the Dirichlet boundary values are bounded.
If this is not the case, the situation becomes more complicated~\cite[section 13]{Diening:Harjulehto:Hasto:Ruzicka:2011} and it is necessary to impose additional properties on $p$ and the domain.
\begin{lemma}
Suppose $1 < p^- \leq p(x) \leq p^+ < \infty$, and that $\Omega \subset \R^d$, $d \in \Z_+$, is a bounded open set that supports the Poincaré inequality with $p \equiv 1$.
Consider boundary values $f \in W^{1,p(\cdot)} \cap L^\infty(\Omega)$.
Then there exists a unique minimizer in $W^{1,p(\cdot)} \cap L^\infty(\Omega) + f$ of the energy~\eqref{eq:normal_energy}.
\end{lemma}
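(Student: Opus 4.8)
The plan is to run the direct method of the calculus of variations, using the $L^\infty$ truncation for a double purpose. Write $E(v) = \int_\Omega \frac{\sigma}{p(x)} \abs{\nabla v}^{p(x)} \der x$, let $M = \norm{f}_{L^\infty(\Omega)}$, and interpret the admissible class as the affine set of functions with boundary values $f$, i.e.\ $v$ with $v - f \in W^{1,p(\cdot)}_0 \cap L^\infty(\Omega)$. Since $\sigma$ is a conductivity (bounded between positive constants) and $p > 1$, we have $E \ge 0$, so $E$ is bounded below and admits a minimizing sequence $(v_k)$. The first step is to reduce to uniformly bounded functions: for any admissible $v$ the truncation $v^M = \max(-M, \min(M, v))$ still has boundary values $f$ (because $\abs{f} \le M$), remains admissible, and satisfies $\abs{\nabla v^M} \le \abs{\nabla v}$ almost everywhere, whence $E(v^M) \le E(v)$. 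So I may assume $\norm{v_k}_{L^\infty} \le M$ for all $k$.

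The second, and I expect hardest, step is coercivity. In a variable exponent space the energy is a \emph{modular} of the gradient, which does not translate directly into a norm bound, and I want to avoid assuming any regularity (such as log-Hölder continuity) of $p$. This is exactly where the $L^\infty$ bound and the $p \equiv 1$ Poincaré hypothesis combine. From $E(v_k) \le C$ and the bounds on $\sigma$ and $p$ I get a uniform modular bound $\int_\Omega \abs{\nabla v_k}^{p(x)} \der x \le C'$, and the unit ball property (valid for any measurable exponent) then bounds $\norm{\nabla v_k}_{L^{p(\cdot)}}$. To control the functions, set $w_k = v_k - f$, so $\norm{w_k}_{L^\infty} \le 2M$; the elementary pointwise inequality $\abs{w_k}^{p(x)} \le \max\sulut{1,(2M)^{p^+ - 1}} \abs{w_k}$, together with the $p \equiv 1$ Poincaré inequality $\int_\Omega \abs{w_k} \le C \int_\Omega \abs{\nabla w_k}$ and the crude bound $\int_\Omega \abs{\nabla w_k} \le \abs{\Omega} + \int_\Omega \abs{\nabla w_k}^{p(x)} \der x$, yields a uniform modular bound on $w_k$ and hence a norm bound. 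Thus $(v_k)$ is bounded in $W^{1,p(\cdot)}(\Omega)$.

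The third step extracts a weak limit and identifies it as a minimizer. Because $1 < p^- \le p^+ < \infty$, the space $W^{1,p(\cdot)}(\Omega)$ is reflexive (this needs no regularity of $p$), so a subsequence satisfies $v_k \rightharpoonup u$ weakly in $W^{1,p(\cdot)}$. The affine constraint set and the ball $\norm{\cdot}_{L^\infty} \le M$ are convex and strongly closed, hence weakly closed, so $u$ is admissible with $\norm{u}_{L^\infty} \le M$. The integrand $\xi \mapsto \frac{\sigma(x)}{p(x)} \abs{\xi}^{p(x)}$ is a nonnegative Carathéodory function that is convex in $\xi$, so $E$ is sequentially weakly lower semicontinuous; therefore $E(u) \le \liminf_k E(v_k) = \inf E$, and $u$ is a minimizer.

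The final step is uniqueness via strict convexity. For $p(x) > 1$ and $\sigma > 0$ the integrand is strictly convex in $\xi$, so if $u_1, u_2$ are two minimizers then $\tfrac{1}{2}(u_1 + u_2)$ is admissible with $E\sulut{\tfrac{1}{2}(u_1+u_2)} \le \tfrac{1}{2}\sulut{E(u_1) + E(u_2)} = \inf E$; strict convexity forces $\nabla u_1 = \nabla u_2$ almost everywhere, since otherwise the inequality would be strict on a set of positive measure. Then $u_1 - u_2 \in W^{1,p(\cdot)}_0(\Omega)$ has vanishing gradient, and the $p \equiv 1$ Poincaré inequality gives $u_1 = u_2$. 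The only points needing care beyond the coercivity estimate are checking weak closedness of the constraint and weak lower semicontinuity without extra hypotheses on $p$, both of which follow from convexity alone.
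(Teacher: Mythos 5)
Your proposal is correct and follows essentially the same route as the paper: the direct method in the calculus of variations, using reflexivity of $W^{1,p(\cdot)}$, convexity of the integrand for weak lower semicontinuity, and coercivity obtained by combining the $p\equiv 1$ Poincar\'e inequality with the $L^\infty$ bound coming from the bounded boundary values. You in fact spell out the two points the paper leaves implicit or delegates to citations --- the truncation argument producing a uniformly bounded minimizing sequence, and the use of \emph{strict} convexity of $\xi \mapsto \abs{\xi}^{p(x)}$ together with Poincar\'e to get uniqueness (mere convexity, as stated in the paper, would not suffice) --- so your write-up is a more detailed rendering of the same proof.
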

Recall that the 1-Poincaré inequality is satisfied for example in John domains~\cite[section 8.2]{Diening:Harjulehto:Hasto:Ruzicka:2011}, and in particular in Lipschitz domains.
\begin{proof}
The proof uses the direct method in the calculus of variations.
The variable exponent Sobolev space is a reflexive Banach space~\cite[theorem 8.1.6]{Diening:Harjulehto:Hasto:Ruzicka:2011} and the functional is convex, since $t \mapsto c t^p$ is convex for all $p \geq 1$ and $c \geq 0$.
The energies are lower semicontinuous~\cite[theorem 3.2.9 and section 3.2]{Diening:Harjulehto:Hasto:Ruzicka:2011}.
Coercivity of the functional requires the Poincaré inequality with $p\equiv 1$ (since we assume bounded boundary values).
Therefore the functional has a unique minimizer.
\end{proof}

Suppose $\Omega$ is a bounded open set that is smooth enough for $\nabla u \in C\sulut{\ol \Omega}$, and 
also suppose the conductivity and the boundary values are smooth enough; see the regularity lemma, lemma~\ref{lemma:p_regularity}, for sufficient conditions.
Then the voltage-to-current, or Dirichlet-to-Neumann, map is
\begin{equation}
\Lambda_\sigma (u) = \sigma \abs{\nabla u}^{p(x)-2} \nabla u \cdot \nu
\end{equation}
in its strong form.
One typically defines the map in the weak sense~\cite[section 2]{Brander:Winterrose}, but we make no use of the weak definition in the present work.

In the general nonlinear setting the following lemma gives sufficient conditions for the boundary regularity:
\begin{lemma}[Regularity] \label{lemma:p_regularity}
Let $0 < \beta \leq 1$ and $1 < p^- \leq p(x) \leq p^+ < \infty$.
Suppose $\Omega$ is a bounded open $C^{1,\beta}$ set,
the exponent $p(\cdot)$ is Hölder continuous in $\ol \Omega$,
and suppose the conductivity $0 < \sigma \in C^{0,\beta}(\ol{\Omega})$ is bounded from above.
Consider the weighted $p$-Laplace equation with Dirichlet boundary values $f \in C^{1,\beta}(\doo \Omega)$ or Neumann boundary values ~$N \in C^{1,\beta}(\doo \Omega)$.
Then the solution~$u$ of the weighted $p(\cdot)$-Laplace equation is in $C^{1,\gamma}(\ol{\Omega})$ for some $\gamma > 0$.
\end{lemma}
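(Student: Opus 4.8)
The plan is to reduce the statement to the established interior and boundary regularity theory for elliptic equations with $p(\cdot)$-growth, so that the proof becomes essentially a verification that our hypotheses match the hypotheses of those results. First I would recast the equation in the divergence form $\dive A(x, \nabla u) = 0$ with the vector field $A(x,\xi) = \sigma(x) \abs{\xi}^{p(x)-2}\xi$, and record the structural properties that drive the regularity machinery. Since $\sigma$ is continuous and positive on the compact set $\ol\Omega$ and bounded above, we have $0 < c_0 \le \sigma \le c_1 < \infty$; hence $A$ satisfies the standard $p(\cdot)$-ellipticity and growth bounds $\ip{A(x,\xi)}{\xi} \ge c_0 \abs{\xi}^{p(x)}$ and $\abs{A(x,\xi)} \le c_1 \abs{\xi}^{p(x)-1}$, together with strict monotonicity in $\xi$. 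The dependence on $x$ enters only through $\sigma$ and through the exponent $p$; both are Hölder continuous, and Hölder continuity implies the log-Hölder continuity that the variable-exponent theory requires.

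With the structure conditions in hand, interior regularity $u \in C^{1,\alpha}_{\mathrm{loc}}(\Omega)$ follows from the regularity theory for functionals and equations with $p(\cdot)$-growth (Acerbi and Mingione, Coscia and Mingione, Fan and Zhao). For boundary regularity I would localise near a boundary point and use the $C^{1,\beta}$ regularity of $\doo\Omega$ to flatten the boundary by a $C^{1,\beta}$ diffeomorphism $\Phi$ carrying a boundary neighbourhood onto a half-ball. The key point is that this change of variables preserves the structural class: the transformed field $\tilde A$ has the same $p(\cdot)$-growth with the transformed exponent $\tilde p = p \circ \Phi^{-1}$ still Hölder continuous, while the new coefficient, which absorbs the Jacobian factors of $\Phi$, remains $C^{0,\beta}$ precisely because the derivative of a $C^{1,\beta}$ map is $C^{0,\beta}$. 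The Dirichlet datum transforms to $C^{1,\beta}$ data on the flat boundary, and the conormal (Neumann) condition transforms to a conormal condition of the same type.

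The final step is to invoke a boundary $C^{1,\gamma}$ regularity result for variable-exponent equations on a half-ball. For the Dirichlet problem this is supplied by Fan's global $C^{1,\gamma}$ regularity for variable exponent elliptic equations in divergence form; patching the resulting local estimates with the interior estimate yields $u \in C^{1,\gamma}(\ol\Omega)$. The main obstacle I anticipate is the Neumann case: boundary regularity for the conormal problem for $p(\cdot)$-Laplacian type equations is considerably less standard in the literature than the Dirichlet case. Either an appropriate reference must be located, or the estimate must be produced directly — for instance by an even reflection across the flattened boundary, after which the interior theory applies to the extended solution on a full ball. The delicate point there is to check that such a reflection preserves both the $p(\cdot)$-growth structure and the Hölder continuity of the transformed coefficients, so that the hypotheses of the interior theorem are genuinely met by the extended problem.
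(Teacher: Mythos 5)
The paper does not supply an argument of its own here: the lemma is proved by a one-line citation to Fan's global regularity results for variable exponent elliptic equations in divergence form, \cite[theorems 1.2 and 1.3]{Fan:2007}, which apply directly to $\dive\sulut{\sigma\abs{\nabla u}^{p(x)-2}\nabla u}=0$ under exactly the stated hypotheses (H\"older continuous $p$, positive bounded $\sigma\in C^{0,\beta}$, bounded $C^{1,\beta}$ domain, $C^{1,\beta}$ Dirichlet or Neumann data). Your reduction --- rewriting the equation as $\dive A(x,\nabla u)=0$ and checking the $p(\cdot)$-growth, ellipticity, and H\"older (hence log-H\"older) dependence on $x$ --- is precisely the verification that the hypotheses of Fan's theorems are met, so your route is essentially the paper's, with the bookkeeping made explicit. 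Two corrections, though. First, the ``main obstacle'' you anticipate in the Neumann case is not an obstacle: Fan's paper treats the conormal problem alongside the Dirichlet one (that is what the second of the two cited theorems is for), so no separate reference hunt and no reflection argument are needed; likewise the boundary flattening is unnecessary, since Fan's results are stated for $C^{1,\beta}$ domains directly. Second, your fallback of even reflection would be genuinely problematic: with inhomogeneous Neumann data $N\neq 0$, the evenly reflected function does not solve an equation of the same divergence form across the flattened boundary --- the conormal flux jumps there, leaving a surface source term --- so the extended problem would not satisfy the hypotheses of the interior theory. A direct boundary estimate for the conormal problem is what is really required, and that is exactly what the cited result supplies.
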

A proof of the lemma can be found in a paper of Fan~\cite[theorems 1.2. and 1.3]{Fan:2007}.

\section{Boundary determination}
\label{sec:inverse}

In this section we always assume that $\nabla u \in C\sulut{\ol \Omega}$ (see lemma~\ref{lemma:p_regularity} for sufficient conditions for this) and $\doo \Omega$ is $C^1$-smooth.

In AET and CDII we have several different kinds of measurement data.
\begin{description}
\item[Dirichlet data] $D = u|_{\doo \Omega}$ is the boundary potential, i.e.\ electric voltage on the boundary.
\item[Neumann data] $N = \sigma \abs{\nabla u}^{p(x)-2} \nabla u \cdot \nu$ is the current flux out of the domain.
\item[Interior data] $H = \sigma \abs{\nabla u}^{q(x)}$ with $0 \le q(x) < \infty$ is, if $q = 2$, the electric power density, and if $q = 1$, the current flux density.
\end{description}

The Dirichlet data also lets us calculate the component of $\nabla u$ tangent to the boundary at boundary points.
Suppose that at every boundary point $x \in \doo \Omega$ the vectors $\nu, \alpha_1,\ldots,\alpha_j,\ldots,\alpha_{d-1}$ are orthonormal.
Then, supposing the boundary of the domain is $C^1$, we can calculate for every $1 \leq j \leq d-1 $ the quantity $\nabla u \cdot \alpha_j $ from the Dirichlet data.
We fix a boundary point $x \in \doo \Omega$, omit it from the notation, and write
\begin{align}
A &= \sqrt{\sum_{j = 1}^{d-1} \abs{\nabla u \cdot \alpha_j}^2} \\
n &= \nabla u \cdot \nu.
\end{align}
Note that $A$ is calculated from the Dirichlet data~$D$, which is known, while $n$ is unknown.
Also note that $A$ is independent of the choice of the vectors~$\alpha_j$.

\subsection{Boundary determination at a point}
\label{sec:bdry_at_point}

If $\nabla u = 0$, then $A = N = H = 0$ and we can recover nothing.

If $\nabla u \cdot \nu = 0 $ but $\nabla u \neq 0$, then we can recover conductivity from the interior data and Dirichlet data:
\begin{equation}
\sigma = H \abs{\nabla u}^{-q} = H A^{-q}.
\end{equation}

If $A= 0$ and $N \neq 0$, then we can reconstruct~$\sigma$ when $\abs{N} \neq H$, which happens if and only if $p-1 \neq q$.

We now consider the general case, where $N \neq 0$, $A \neq 0$ and $H \neq 0$.
We want to solve the nonlinear pair of equations
\begin{align}
N &= \sigma \sulut{A^2 + n^2}^{(p-2)/2}n \label{eq:N}
\\
H &= \sigma \sulut{A^2+n^2}^{q/2}, \label{eq:H}
\end{align}
where $N$, $A$, $H$, $p$ and $q$ are known quantities, and $n$ and $\sigma$ are the unknowns.
Dividing equation~\eqref{eq:N} by equation~\eqref{eq:H} we get
\begin{align}
\label{eq:g_n}
N / H = \sulut{A^2 + n^2}^{\sulut{p - q - 2}/2}n.
\end{align}
Every solution~$n$ to equation~$\eqref{eq:g_n}$ also gives a possible solution~$\sigma$ to the pair of equations~\eqref{eq:N} -- \eqref{eq:H}.
We write 
\begin{equation}
g(n) = \sulut{A^2 + n^2}^{\sulut{p - q - 2}/2}n.
\end{equation}
Without loss of generality we may assume $n > 0$, since $n$ and $N$ have the same sign and the other variables (except the power~$p-q$) are positive.

We first observe that for all values of $p-q$ we have
\begin{equation}
\lim_{n \to 0} g(n) = 0.
\end{equation}
When $p-q > 1$, we also have
\begin{equation}
\lim_{n \to \infty} g(n) = \infty.
\end{equation}
When $p-q = 1$, we instead have
\begin{equation}
\lim_{n \to \infty} g(n) = 1,
\end{equation}
and when $p-q < 1$, we get
\begin{equation}
\lim_{n \to \infty} g(n) = 0.
\end{equation}

Since $g'(n) > 0$ when $p-q \geq 1$, $g$ is strictly increasing.
For $p-q < 1$, $g$ increases strictly until
\begin{equation}
-(p-q-1)n^2 = A^2,
\end{equation}
and decreases strictly after that.

Thence: For $p-q < 1$, we may have one or two solutions to equation~\eqref{eq:g_n}, and for $p-q \ge 1$, we have exactly one potential solution.
We can then solve for $\sigma$ from $H$.
Substituting this into the formula for $N$, equation~\eqref{eq:N} verifies that all the potential solutions do indeed solve the pair of equations.
We have thus proved the following theorem.
\begin{theorem}
\label{thm:boundary_determination}
Let $\Omega \subset \R^d$ be an open set, and suppose that $\Omega, f, p$ and $\sigma$ are such that the weighted $p(\cdot)$-Laplace equation has a unique solution~$u \in C^1\sulut{\ol{\Omega}}$.

Then, from the combined Dirichlet data, Neumann data and interior data~$H(x) = \sigma \abs{\nabla u}^{q(x)}$ we can recover the following at a boundary point~$x$:
\begin{itemize}
\item If $H(x) = 0$, nothing.
\item If $N(x) = 0 $ but $A(x) \neq 0$, then we can recover conductivity from the interior and Dirichlet data:
\begin{equation}
\sigma(x) = H(x) \abs{\nabla u(x)}^{-q(x)} = H(x) \sulut{A(x)}^{-q(x)}.
\end{equation}
\item If $A(x)= 0$ and $N(x) \neq 0$, then we can reconstruct~$\sigma(x)$ if and only if $p(x)-q(x) \neq 1$:
\begin{equation}
\begin{split}
\sigma &= H^{1+q/(p-q-1)} N^{-q/(p-q-1)} \\
&= N^{1-(p-1)/(p-q-1)} H^{(p-1)/(p-q-1)}.
\end{split}
\end{equation}
\item If $A(x) \neq 0$ and $N(x) \neq 0$, then the pair of equations
\begin{align}
N &= \sigma \sulut{A^2 + n^2}^{(p-2)/2}n \\
H &= \sigma \sulut{A^2+n^2}^{q/2},
\end{align}
has two pairs of solutions $(\sigma,n)$, both of which yield candidates for the conductivity when $p-q < 1$ and $-\sulut{p-q-1}n^2 \neq A^2$.
If $p-q \ge 1$, or $p-q < 1$ and $-\sulut{p-q-1}n^2 = A^2$, the equations have only one solution pair.
\end{itemize}
\end{theorem}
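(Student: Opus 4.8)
The plan is to verify the four bullets of the theorem by assembling the elementary observations already laid out above: the three degenerate configurations are settled by direct algebra from the definitions of $N$ and $H$, and the generic configuration is settled by a monotonicity analysis of the single scalar function $g$. First I would dispose of the boundary cases. Since $\sigma > 0$, the interior data $H = \sigma \abs{\nabla u}^q$ vanishes precisely when $\nabla u = 0$, in which case $A$ and $N$ vanish as well and nothing can be recovered. When $A \neq 0$ but $N = 0$, the factor $\sigma \abs{\nabla u}^{p-2}$ in $N$ is nonzero, so $N = 0$ forces $n = \nabla u \cdot \nu = 0$; then $\abs{\nabla u} = A$ and $H = \sigma A^q$ give $\sigma = H A^{-q}$ at once. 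When $A = 0$ but $N \neq 0$, we have $\nabla u = n\nu$ and $\abs{\nabla u} = \abs{n}$, so $\abs{N} = \sigma \abs{n}^{p-1}$ and $H = \sigma \abs{n}^q$; dividing yields $\abs{N}/H = \abs{n}^{p-q-1}$, which pins down $\abs{n}$ and hence $\sigma$ exactly when $p - q - 1 \neq 0$ and carries no information when $p - q = 1$. Substituting back reproduces the two equivalent reconstruction formulas.

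The core is the generic case $A \neq 0$, $N \neq 0$ (hence $H \neq 0$). Here I would reduce the pair \eqref{eq:N}--\eqref{eq:H} to the single equation $g(n) = N/H$, as already derived, and restrict to $n > 0$ since $n$ inherits the sign of $N$. The key computation is the derivative
\begin{equation}
g'(n) = \sulut{A^2 + n^2}^{(p-q-4)/2}\sulut{(p-q-1)n^2 + A^2},
\end{equation}
whose sign is governed by the bracket alone, the prefactor being strictly positive. When $p - q \geq 1$ the bracket is at least $A^2 > 0$, so $g$ is strictly increasing; combined with the limits $g(0^+) = 0$ and $g(\infty) \in \joukko{\infty, 1}$ computed above, together with the fact that the target value $N/H$ is actually attained because the data comes from a genuine solution, strict monotonicity yields exactly one admissible $n$, and then $\sigma = H\sulut{A^2 + n^2}^{-q/2}$. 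When $p - q < 1$ the bracket changes sign exactly at $-(p-q-1)n^2 = A^2$, so $g$ rises strictly to a single maximum and then decreases strictly back to $0$; by the intermediate value theorem this produces exactly two preimages of each value in $\sulut{0, \max g}$ and exactly one preimage at $\max g$, which is precisely the dichotomy $-(p-q-1)n^2 \neq A^2$ versus $-(p-q-1)n^2 = A^2$. I would finish by substituting each admissible pair $(\sigma,n)$ back into \eqref{eq:N}--\eqref{eq:H} to confirm these are genuine solution pairs.

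The main obstacle is the solution count in the non-elliptic regime $p - q < 1$: one must show not merely that $g$ is unimodal (which the derivative delivers) but that the two candidates coincide exactly when the true normal derivative sits at the maximum of $g$, i.e.\ when $-(p-q-1)n^2 = A^2$. Identifying this critical relationship as the precise boundary between one and two candidates, and observing that, since the data arises from an actual $C^1$ solution, the value $N/H$ always lies in the range of $g$ so that a solution exists, is the delicate point; everything else follows routinely from the monotonicity of $g$ and the limit computations already in hand.
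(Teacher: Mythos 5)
Your proposal is correct and follows essentially the same route as the paper: settle the degenerate cases ($H=0$; $N=0,A\neq 0$; $A=0,N\neq 0$) by direct algebra, then reduce the generic case to the scalar equation $g(n)=N/H$ and count solutions via the monotonicity of $g$, with the dichotomy at $-(p-q-1)n^2=A^2$. You in fact supply a detail the paper only asserts, namely the explicit derivative $g'(n)=\sulut{A^2+n^2}^{(p-q-4)/2}\sulut{(p-q-1)n^2+A^2}$, so nothing further is needed.
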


For convenience, we state the formulas for conductivity that are relevant for the currently researched medical imaging modalities of interest.
They can be recovered by explicitly solving the equations, which is possible for values of $p-q$ that turn equation~\eqref{eq:g_n} into a polynomial equation of small order.
\begin{corollary}
\label{cor:p2q1}
\begin{description}
\item[CDII/MREIT, $p - q \equiv 1$.] If $A=0$, we can say nothing. If  $A \neq 0$, we have a unique conductivity that agrees with the measurements:
\begin{equation}
\sigma = \frac{\sqrt{H^2-N^2}}{A}.
\end{equation}
\end{description}
\end{corollary}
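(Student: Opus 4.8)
The plan is to read off the result as the concrete specialization $p \equiv 2$, $q \equiv 1$ of Theorem~\ref{thm:boundary_determination}, in which case $p - q \equiv 1$, and then to perform the elimination that the theorem leaves implicit. Because $p - q = 1 \ge 1$, the theorem already supplies uniqueness of the solution pair $(\sigma, n)$ in the generic case $A \neq 0$, $N \neq 0$, $H \neq 0$, so the only real task is to produce the closed form for $\sigma$. The degenerate case $A = 0$ I would dispatch by the corresponding bullet of the theorem: here $\sqrt{A^2 + n^2} = \abs{n}$, so that~\eqref{eq:N}--\eqref{eq:H} collapse to the single relation $H = \abs{N} = \sigma\abs{n}$, which leaves $\sigma$ undetermined---this is exactly the assertion that we can say nothing.

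For $A \neq 0$, I would substitute $p = 2$ and $q = 1$ into~\eqref{eq:N}--\eqref{eq:H} to obtain
\begin{align}
N &= \sigma n, \\
H &= \sigma \sqrt{A^2 + n^2}.
\end{align}
Squaring both equations gives $N^2 = \sigma^2 n^2$ and $H^2 = \sigma^2(A^2 + n^2)$, and subtracting the first from the second eliminates $n$ and leaves $H^2 - N^2 = \sigma^2 A^2$. Since $\sigma > 0$ and $A \neq 0$, the positive root yields $\sigma = \sqrt{H^2 - N^2}/A$, as claimed; the matching value $n = N/\sigma = NA/\sqrt{H^2 - N^2}$ then satisfies both original equations and exhibits the solution pair explicitly.

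The remaining points are verifications rather than difficulties. First, the quantity under the square root is nonnegative: from $\sigma > 0$ we have $H = \sigma\sqrt{A^2 + n^2} \ge \sigma\abs{n} = \abs{N}$, with equality precisely when $A = 0$, so for $A \neq 0$ indeed $H^2 - N^2 > 0$ and the formula is real and well defined. Second, the positive root is forced by $\sigma > 0$, and uniqueness need not be re-proved since it is inherited from Theorem~\ref{thm:boundary_determination} (equivalently, for these exponents $g(n) = n/\sqrt{A^2 + n^2}$ is strictly increasing from $0$ up to the limit $1$, so $N/H = g(n) < 1$ has a single root). I do not expect any genuine obstacle: the whole content is that the $p - q = 1$ branch of the general theorem reduces to one quadratic identity in $\sigma$, solvable by a single elimination of $n$.
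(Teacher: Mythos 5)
Your proof is correct and follows essentially the same route as the paper: the paper presents this corollary as a direct specialization of Theorem~\ref{thm:boundary_determination}, obtained ``by explicitly solving the equations'' \eqref{eq:N}--\eqref{eq:H} (equivalently \eqref{eq:g_n}) when $p-q=1$ makes them polynomial of low order, which is exactly the elimination you carry out, with uniqueness and the degenerate case $A=0$ inherited from the theorem just as in your argument. Your squaring-and-subtracting step, the positivity check $H>\abs{N}$ for $A\neq 0$, and the reading of ``CDII/MREIT'' as $p\equiv 2$, $q\equiv 1$ all match the paper's intent; there is no substantive difference.
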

\begin{corollary}
\label{cor:pq_2}
\begin{description}
\item[UMEIT/AET, $p - q \equiv 0$.] Since $p - q < 1$, we expect two candidate values of the normal derivative~$n$ and thereby two conductivity candidates that agree with the measurements:
\begin{align}
n_\pm &=  \frac{H}{2N}\sulut{1 \pm \sqrt{1 - 4 A^2N^2/H^2}}
\label{eq:n_pm}
\\
\sigma_\pm &= \frac{N}{n_\pm} = \frac{H}{n_\pm^2 + A^2} =  \frac{2N^2}{H\sulut{1\pm\sqrt{1-4A^2N^2/H^2}}}.
\label{eq:sigma_pm}
\end{align}
This formula is true when $N \neq 0$ and $A \neq 0$. Otherwise:
\begin{itemize}
\item If $H = 0$, then 
we can say nothing.
\item If $N = 0$ but $A \neq 0$, then $\sigma = H A^{-2}$.
\item If $A = 0$ but $N \neq 0$, then $\sigma = N^2 H^{-1}$.
\end{itemize}

\end{description}
\end{corollary}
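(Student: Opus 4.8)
The plan is to take the general reduction that precedes theorem~\ref{thm:boundary_determination} and specialise it to $p - q \equiv 0$, where the transcendental relation~\eqref{eq:g_n} degenerates into a quadratic solvable in closed form. For AET/UMEIT the interior exponent is $q = 2$, so the hypothesis $p - q = 0$ forces $p = 2$; the forward model is then the linear conductivity equation~\eqref{eq:linear_cond_eqn}. With $p = 2$ the exponent $(p-2)/2$ in~\eqref{eq:N} vanishes and the exponent $q/2$ in~\eqref{eq:H} equals $1$, so the governing pair simplifies to $N = \sigma n$ and $H = \sigma\bigl(A^2 + n^2\bigr)$. Because $p - q < 1$, theorem~\ref{thm:boundary_determination} already predicts up to two candidate pairs $(\sigma, n)$; the task here is merely to write them out.

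First I would assume $N \neq 0$ and $A \neq 0$ and form the ratio in~\eqref{eq:g_n}. Substituting $p - q = 0$ gives $N/H = n/\bigl(A^2 + n^2\bigr)$, hence $N\bigl(A^2 + n^2\bigr) = Hn$, that is
\begin{equation}
N n^2 - H n + N A^2 = 0.
\end{equation}
The quadratic formula yields $n_\pm = \bigl(H \pm \sqrt{H^2 - 4 N^2 A^2}\bigr)/(2N)$, which becomes~\eqref{eq:n_pm} after factoring $H/(2N)$ out of the radical. Recovering the conductivity is then immediate: from $N = \sigma n$ we get $\sigma_\pm = N/n_\pm$, and inserting $n_\pm$ produces the rightmost expression in~\eqref{eq:sigma_pm}. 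The middle form $\sigma = H/\bigl(n^2 + A^2\bigr)$ is simply~\eqref{eq:H} with $p = 2$, and it agrees with $N/n$ precisely because the quadratic asserts $N\bigl(n^2 + A^2\bigr) = Hn$.

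Several verifications will need attention, though none is deep. I would check that both roots are genuinely admissible. Taking $N > 0$ without loss of generality, the roots have product $A^2 > 0$ and sum $H/N > 0$, so both $n_\pm$ are positive and give positive conductivities $\sigma_\pm = N/n_\pm$, consistent with the sign convention $n > 0$. The discriminant $H^2 - 4 N^2 A^2$ is nonnegative exactly because the data come from an actual solution; equivalently, the map $n \mapsto n/\bigl(A^2 + n^2\bigr)$ attains its maximum $1/(2A)$ at $n = A$, forcing $N/H \le 1/(2A)$. When the discriminant vanishes the two roots coalesce at $n = A$, giving a single candidate, which matches the degenerate case $-\bigl(p-q-1\bigr)n^2 = A^2$ of theorem~\ref{thm:boundary_determination}.

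Finally I would dispose of the boundary cases in which the quadratic degenerates or the gradient vanishes. If $H = 0$, then $\sigma\abs{\nabla u}^q = 0$ with $\sigma > 0$ forces $\nabla u = 0$, so $A = N = 0$ and nothing can be said. If $N = 0$ but $A \neq 0$, then $N = \sigma n$ gives $n = 0$, so $\abs{\nabla u} = A$ and~\eqref{eq:H} yields $\sigma = H A^{-2}$. If $A = 0$ but $N \neq 0$, the quadratic collapses to $N n^2 - H n = 0$ with admissible root $n = H/N$ (the root $n = 0$ being excluded since it would force $N = 0$), whence $\sigma = N/n = N^2 H^{-1}$. As every step is an explicit algebraic manipulation, the real work is bookkeeping: confirming that the three displayed forms of $\sigma_\pm$ coincide and that the sign and discriminant conditions are correctly tracked, rather than overcoming any genuine obstacle.
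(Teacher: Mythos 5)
Your proposal is correct and takes essentially the same route as the paper, which proves corollary~\ref{cor:pq_2} by ``explicitly solving the equations'': substituting $p-q\equiv 0$ into equation~\eqref{eq:g_n} to obtain the quadratic $Nn^2 - Hn + NA^2 = 0$, solving for $n_\pm$, and recovering $\sigma_\pm$ from \eqref{eq:N}--\eqref{eq:H}, exactly as you do. Your supplementary checks (both roots positive under the sign convention $n>0$, nonnegative discriminant for exact data with coalescence at $n^2=A^2$, and the degenerate cases $H=0$, $N=0$, $A=0$) agree with the case analysis in section~\ref{sec:bdry_at_point} and theorem~\ref{thm:boundary_determination}.
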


The next lemma states that the conductivity candidates are ordered.
It is used in algorithm~\ref{alg:hard_alg} when checking the equality of the candidates.
\begin{lemma}
When $p=q$, we have
$\abs{n_-} \le \abs{n_+}$
and
$\sigma_+ \le \sigma \le \sigma_-$.
\label{lem:n_difference}
\end{lemma}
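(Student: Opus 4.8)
The plan is to observe that the statement is essentially a consequence of the explicit formulas in corollary~\ref{cor:pq_2}, specialised to $p=q$ (so that $p-q=0<1$), together with the monotone relationship between $n_\pm$ and $\sigma_\pm$. Throughout I work in the nontrivial regime $N\neq 0$, $A\neq 0$, $H\neq 0$ covered by~\eqref{eq:n_pm}--\eqref{eq:sigma_pm}, and I use the reduction already made above that $n$ shares the sign of $N$; up to this common sign I may assume $N>0$, whence $H>0$ as well.

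First I would record that the discriminant is admissible. Writing $\delta = \sqrt{1-4A^2N^2/H^2}$, the candidates solve the pair~\eqref{eq:N}--\eqref{eq:H} precisely when $\delta$ is real, i.e.\ $4A^2N^2 \le H^2$; and since $A\neq 0$ forces $4A^2N^2/H^2>0$, we have $0\le \delta<1$. Consequently both factors $1\pm\delta$ are strictly positive, and from $n_\pm = \frac{H}{2N}\sulut{1\pm\delta}$ with $H/(2N)>0$ we read off $0<n_-\le n_+$. In particular $\abs{n_-}\le\abs{n_+}$, which is the first assertion; for general sign of $N$ the two roots merely inherit that sign, so the comparison of absolute values is unchanged.

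For the ordering of the conductivities I would use $\sigma_\pm = N/n_\pm$ from~\eqref{eq:sigma_pm}: dividing the positive constant $N$ by the larger denominator yields the smaller value, so $0<n_-\le n_+$ gives $\sigma_+ = N/n_+ \le N/n_- = \sigma_-$ (the same conclusion also follows from $\sigma_\pm = H/(n_\pm^2+A^2)$ together with $n_-^2\le n_+^2$). Finally, the true conductivity $\sigma$ must arise from a solution $(\sigma,n)$ of~\eqref{eq:N}--\eqref{eq:H}, and theorem~\ref{thm:boundary_determination} guarantees that in this regime the only such pairs are $(\sigma_+,n_+)$ and $(\sigma_-,n_-)$; hence $\sigma\in\joukko{\sigma_+,\sigma_-}$ and therefore $\sigma_+\le\sigma\le\sigma_-$.

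The argument is entirely elementary and I expect no substantial obstacle beyond bookkeeping. The only point requiring a little care is the sign handling in the absolute-value claim $\abs{n_-}\le\abs{n_+}$, where one should confirm that both roots share the sign of $N$ rather than comparing signed quantities, together with the strict bound $\delta<1$ (so that both candidates are genuinely positive and the denominators in~\eqref{eq:sigma_pm} do not vanish); this is exactly where the hypothesis $A\neq 0$ is used, while the degenerate case $\delta=0$ simply collapses all three quantities to equality.
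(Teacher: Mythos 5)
Your proof is correct and follows essentially the same route as the paper's: order $\abs{n_-} \le \abs{n_+}$ from the explicit formula~\eqref{eq:n_pm}, deduce $\sigma_+ \le \sigma_-$ via~\eqref{eq:sigma_pm}, and conclude since $\sigma$ must equal one of the two candidates. Your additional care with the discriminant $0 \le \delta < 1$ and the sign reduction is sound bookkeeping that the paper leaves implicit.
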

\begin{proof}
Since $\sigma(x)$ equals one of the $\sigma_\pm(x)$, it is sufficient to prove $\sigma_+(x) \le \sigma_-(x)$ at every boundary point $x \in \doo \Omega$.
From~\eqref{eq:n_pm} we have $\abs{n_+} \ge \abs{n_-}$, which by~\eqref{eq:sigma_pm} implies $\sigma_+ \le \sigma_-$.
\end{proof}

\subsection{Selecting the right candidate}
\label{sec:select}

The following propositions allow the unique recovery of conductivity in the case $p-q < 1$ around points where one of the candidate conductivities goes to infinity or zero.
Algorithm~2 in section~\ref{sec:algorithm} uses a similar idea when it checks whether both candidate conductivities are within the a priori bounds. 
The results below state that the bounds will not be satisfied at certain points.
\begin{proposition}
Consider an open set $\Omega \subset \R^d$ and a boundary point $x_0 \in \doo \Omega$ such that:
\begin{itemize}
\item $A(x_0) > 0$
\item $ - \infty < c < p(x)-q(x) < C < 1$ in a neighbourhood of $x_0$
\item $N(x_0) = 0$.
\end{itemize}
Then we can uniquely determine $\sigma(x)$ in a neighbourhood of $x_0$, and the false candidate for conductivity has the limit zero at $x_0$.
\end{proposition}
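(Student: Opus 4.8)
The plan is to use the degeneracy forced by $N(x_0)=0$ together with $A(x_0)>0$ to show that, among the two conductivity candidates produced by theorem~\ref{thm:boundary_determination} at points near $x_0$, exactly one stays bounded away from zero (and equals the true conductivity) while the other collapses to zero. First I would record the situation exactly at $x_0$. Since $A(x_0)>0$ we have $\nabla u(x_0)\neq 0$, so $\abs{\nabla u(x_0)}>0$; because $\sigma>0$ and $N=\sigma\abs{\nabla u}^{p-2}n$, the hypothesis $N(x_0)=0$ forces the true normal component $n(x_0)=\nabla u(x_0)\cdot\nu(x_0)=0$. Consequently $H(x_0)=\sigma(x_0)A(x_0)^{q(x_0)}>0$, and the conductivity at the point itself is already determined, $\sigma(x_0)=H(x_0)A(x_0)^{-q(x_0)}>0$. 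By continuity of $\nabla u$, $\sigma$, $A$, $H$, $p$ and $q$, the quantities $A$, $H$ and $\sigma$ are bounded away from zero on a neighbourhood $U$ of $x_0$, while $n(x)\to 0$ and $N(x)/H(x)\to 0$ as $x\to x_0$.

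Next I would analyse the two roots of equation~\eqref{eq:g_n}, written as $g_x(n)=N(x)/H(x)$ to emphasise that the profile $g_x$ depends on $x$ through $A(x)$ and $(p-q)(x)$. Working with $n>0$ (the sign of $n$ matches that of $N$, and the problem is symmetric under $n\mapsto-n$), the hypothesis $c<(p-q)(x)<C<1$ places us in the single-humped regime of section~\ref{sec:bdry_at_point}: $g_x$ rises from $0$ to a maximum at $n^2=A(x)^2/\bigl(1-(p-q)(x)\bigr)$ and then decays back to $0$. Since $1-(p-q)(x)$ lies in $(1-C,1-c)$, bounded and bounded away from zero, this maximiser stays in a fixed compact subinterval of $(0,\infty)$ uniformly for $x\in U$, and the maximum value is bounded below by a positive constant. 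Hence, for $x$ near $x_0$, where $N(x)/H(x)$ is small, equation~\eqref{eq:g_n} has exactly two positive roots: a small root $n_{\mathrm s}(x)$ below the hump and a large root $n_{\mathrm b}(x)$ above it. The key step is to show $n_{\mathrm s}(x)\to 0$ and $n_{\mathrm b}(x)\to\infty$ as $x\to x_0$; for the large root this follows from the asymptotics $g_x(n)\sim n^{(p-q-1)}$ as $n\to\infty$, whose exponent satisfies $(p-q-1)<C-1<0$ uniformly, so that $g_x(n_{\mathrm b})\to 0$ is possible only if $n_{\mathrm b}\to\infty$.

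With the roots controlled, I would pass to conductivities through~\eqref{eq:H}, that is $\sigma=H(A^2+n^2)^{-q/2}$. The small root yields $\sigma_{\mathrm s}(x)=H(A^2+n_{\mathrm s}^2)^{-q/2}\to H(x_0)A(x_0)^{-q(x_0)}=\sigma(x_0)>0$, whereas the large root yields $\sigma_{\mathrm b}(x)=H(A^2+n_{\mathrm b}^2)^{-q/2}\le H\,n_{\mathrm b}^{-q}\to 0$, using that $q$ is continuous with $q(x_0)>0$ (the case $q\equiv0$ being trivial, as then $\sigma=H$). Because the true normal component $n(x)\to0$, it must coincide with the small root for $x$ near $x_0$; hence the true conductivity is $\sigma_{\mathrm s}$ and the false candidate is $\sigma_{\mathrm b}$, which has limit zero at $x_0$ as asserted. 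To conclude unique determination I would finally shrink $U$ so that $\sigma_{\mathrm b}(x)<\tfrac12\sigma(x_0)<\sigma_{\mathrm s}(x)$ throughout $U$; then for every $x\in U$ the correct candidate is the unique one exceeding the computable threshold $\tfrac12\sigma(x_0)$, so $\sigma$ is recovered uniquely on $U$.

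The step I expect to be the main obstacle is the uniformity of these root asymptotics under the \emph{variable} exponents: one must guarantee $n_{\mathrm b}\to\infty$ and $n_{\mathrm s}\to0$ as $x\to x_0$ even though $g_x$ itself is moving, and it is precisely the two-sided bound $c<(p-q)(x)<C<1$ that keeps the hump location and the decay exponent under uniform control and makes this limit rigorous.
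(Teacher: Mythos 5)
Your proposal is correct and follows essentially the same route as the paper's proof: both exploit that $N/H \to 0$ as $x \to x_0$ together with the unimodal shape of $g$ when $p-q<1$ to show the two candidate roots split into a small one tending to $0$ (the true one, since $u \in C^1\sulut{\ol\Omega}$ forces a bounded normal derivative) and a large one tending to $\infty$ (the false one, whose conductivity collapses to zero). The only cosmetic difference is that you deduce the false candidate's limit $\sigma \to 0$ from the interior-data equation~\eqref{eq:H} (which needs $q$ bounded away from zero, guaranteed here since $q > p - C > 1 - C > 0$), whereas the paper reads it off the Neumann equation~\eqref{eq:N} using only $p > 1$.
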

\begin{proof}
By theorem~\ref{thm:boundary_determination}, $\sigma(x_0)$ is uniquely determined and there are at most two candidates in its neighbourhood.
By considering a sufficiently small neighbourhood, both of the following are true therein:
\begin{itemize}
\item $A(x_0)/2 \le A(x) \le2 A(x_0)$
\item $ - \infty < c < p(x)-q(x) < C < 1$.
\end{itemize}
In the proof we sometimes omit the variable~$x$ from estimates for the sake of readability.
There is no~$x_0$ in the estimates.

Since $N(x) \to 0$ as $x \to x_0$, we consider the inequality
\begin{equation}
\eps > \abs{N(x)}/H(x) = \sulut{A^2 + n^2}^{\sulut{p - q - 2}/2}\abs{n},
\end{equation}
where $\eps > 0$,
and try to solve the candidates for $n$ and thereby the candidates for $\sigma$ based on the available information.
We cannot have $A^2(x) = n^2(x) $ infinitely close to $x_0$, as this would imply $N(x) \not \to 0$.

If $\abs{n(x)} < A(x)$, then we estimate
\begin{equation}
\eps > \sulut{A^2 + n^2}^{\sulut{p - q - 2}/2}\abs{n} > \sulut{\sqrt{2} A}^{p-q-2} \abs{n},
\end{equation}
which only goes to zero if $n(x) \to 0$, since the $\sulut{\sqrt{2} A}^{p-q-2}$ term is bounded.
If $\abs{n(x)} > A(x)$, then we estimate
\begin{equation}
\begin{split}
\eps &> \sulut{A^2 + n^2}^{\sulut{p - q - 2}/2}\abs{n} >  \sqrt{2}^{p-q-2} \abs{n(x)}^{p-q-1},
\end{split}
\end{equation}
which only goes to zero when $\abs{n(x)} \to \infty$, since $p-q < 1$.

We have thus deduced that either $n(x) \to 0$ or $\abs{n(x)} \to \infty$.
Since the solution $u \in C^1\sulut{\ol{\Omega}}$, the normal derivative~$n = \nabla u \cdot \nu$ must be bounded and $n \to 0$.
By continuity, this identifies the correct value of $n$ and hence also $\sigma$.

In particular, if we had $\abs{n(x)} \to \infty$, then due to the equation
\begin{equation}
\sigma(x) \abs{A(x)^2+n(x)^2}^{(p(x)-2)/2}n(x) = N(x) \to 0
\end{equation}
and boundedness of $A$, we would have $\sigma(x) \to 0$.
\end{proof}

\begin{proposition}
Consider an open set $\Omega \subset \R^d$ and a boundary point $x_0 \in \doo \Omega$ such that:
\begin{itemize}
\item $A(x_0) = 0$
\item $ - \infty < c < p(x)-q(x) < C < 1$ in a neighbourhood of $x_0$
\item $N(x_0) \neq 0$
\item $0 < c < \abs{N(x)}/H(x) < C < \infty$ in a neighbourhood of $x_0$.
\end{itemize}
Then we can uniquely determine $\sigma(x)$ in a neighbourhood of $x_0$, and the false candidate is not bounded in this neighbourhood.
\end{proposition}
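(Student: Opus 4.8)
The plan is to mirror the proof of the previous proposition, but now tracking the candidate that blows up rather than the one that vanishes. As before we may assume $n > 0$ and work with the reduced equation
\begin{equation}
N/H = g(n) = \sulut{A^2 + n^2}^{\sulut{p-q-2}/2} n,
\end{equation}
where now $0 < c < N/H < C$ on a neighbourhood of $x_0$. Since $p - q < 1$, the function $g$ increases on $\sulut{0, n_*}$ and decreases on $\sulut{n_*, \infty}$, with the critical point $n_*$ determined by $-\sulut{p-q-1}n_*^2 = A^2$, that is $n_* = A/\sqrt{1-(p-q)}$. A short computation shows that the maximum value $g(n_*)$ is comparable to $A^{p-q-1}$, which tends to $+\infty$ as $A \to 0$ because $p-q-1 < C - 1 < 0$; since $N/H$ stays bounded, for $x$ close enough to $x_0$ the equation has two solutions $n_s < n_* < n_\ell$. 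At the points where it has only one solution, namely where $A = 0$, the conductivity is already unique by theorem~\ref{thm:boundary_determination}.

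Next I would identify the limits of the two branches as $x \to x_0$. Because $1 - (p-q) > 1 - C > 0$ near $x_0$ and $A(x) \to A(x_0) = 0$, the critical point satisfies $n_* \to 0$; hence the small branch is squeezed, $0 < n_s < n_* \to 0$, so $n_s \to 0$. For the large branch I would rule out the two bad limits: if $n_\ell \to \infty$ then $g(n_\ell)$ behaves like $n_\ell^{p-q-1} \to 0$, contradicting $N/H > c > 0$, while $n_\ell$ cannot tend to $0$ since on $\sulut{n_*, \infty}$ the value of $g$ descends from $+\infty$ and only reaches the bounded level $N/H$ after $g$ has settled near its limiting profile. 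Using that $g(n) \to n^{p(x_0)-q(x_0)-1}$ uniformly on compact subsets of $\sulut{0,\infty}$ as $A \to 0$, together with strict monotonicity of the limit, the large branch converges to the unique solution $\sulut{N(x_0)/H(x_0)}^{1/(p(x_0)-q(x_0)-1)}$, which is exactly the value of $n$ furnished at the boundary point by theorem~\ref{thm:boundary_determination}.

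Finally I would read off the conductivities. Along the small branch $A^2 + n_s^2 \to 0$, and from $H = \sigma \sulut{A^2 + n^2}^{q/2}$ the corresponding candidate is
\begin{equation}
\sigma_s = H \sulut{A^2 + n_s^2}^{-q/2}.
\end{equation}
Here $q > 0$, forced by $p > 1$ and $p - q < 1$, while $H$ is bounded below near $x_0$: the ratio bound gives $H > \abs{N}/C$, and $\abs{N}$ is bounded away from zero since $N(x_0) \neq 0$ and $N$ is continuous. Hence $\sigma_s \to +\infty$, so this candidate is unbounded in every neighbourhood of $x_0$, whereas the large branch produces a bounded conductivity converging to $\sigma(x_0)$. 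As the true conductivity is bounded a priori — equivalently, $n = \nabla u \cdot \nu$ is continuous because $u \in C^1\sulut{\ol\Omega}$, hence bounded — the unbounded candidate is rejected and $\sigma$ is uniquely determined near $x_0$.

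I expect the main obstacle to be the asymptotic analysis of $g$ in the regime $A \to 0$, where the critical point $n_*$ and the maximum $g(n_*)$ degenerate simultaneously, since $n_* \to 0$ while $g(n_*) \to +\infty$. Making the separation of the two branches rigorous, in particular that the large branch cannot also collapse to $0$, and controlling the uniform convergence of $g$ while $p$ and $q$ themselves vary with $x$, is the delicate part.
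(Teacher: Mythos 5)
Your proposal is correct and takes essentially the same approach as the paper: both arguments hinge on the factorization $N/H = A^{p-q-1}\,(n/A)\bigl(1+(n/A)^2\bigr)^{(p-q-2)/2}$, which together with the boundedness of $N/H$, $A \to 0$, and $p-q-1<0$ forces the spurious branch to satisfy $n \to 0$ and hence $\sigma = H\bigl(A^2+n^2\bigr)^{-q/2} \to \infty$, while continuity of the true pair $(n,\sigma)$ (with $n(x_0)\neq 0$) selects the other branch. Your packaging via the unimodal profile of $g$ — squeezing the small root below the critical point $n_* = A/\sqrt{1-(p-q)} \to 0$ and identifying the limit of the large root — is a minor reorganization of the paper's dichotomy $n/A \to 0$ or $n/A \to \infty$, not a genuinely different method; the only loose phrase (ruling out $n_\ell \to 0$) is fixed by noting that $n_\ell > n_*$ gives $A/n_\ell$ bounded, so $g(n_\ell)\gtrsim n_\ell^{\,p-q-1}\to\infty$ would contradict $N/H < C$.
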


\begin{proof}
By theorem~\ref{thm:boundary_determination}, $\sigma(x_0)$ is uniquely determined and there are at most two candidate pairs~$(n,\sigma)$ in a neighbourhood of $x_0$.
For convenience we assume $N, n \ge 0$.
We have
\begin{equation}
n(x_0) = \sulut{\frac{N(x_0)}{H(x_0)}}^{1/\sulut{p(x_0)-q(x_0)-1}} \neq 0.
\end{equation}
Due to continuity of $n$ and $\sigma$, we know that there is a candidate pair that converges to $\sulut{n(x_0), \sigma(x_0)}$ as $x \to x_0$.
If we can demonstrate that the other pair does not converge to the same values, then the lemma is proven.

We search for the false candidate pair $\sulut{\bar n, \bar \sigma}$ such that, as $x \to x_0$, we get $\sulut{\bar \sigma(x) , \bar n(x)} \to \sulut{\infty, 0}$, and $\bar n(x) / \bar A (x) \to 0$.
Leaving $x$ implicit, we have
\begin{equation}
\begin{split}
\frac{N}{H} &= n\sulut{A^2+n^2}^{(p-q-2)/2} \\
&= A^{p-q-1}\frac{n}{A}\sulut{1+\sulut{\frac{n}{A}}^2}^{(p-q-2)/2}.
\end{split}
\end{equation}
This must be bounded due to the boundedness assumption on $N/H$. Since $A(x) \to 0$ as $x \to x_0$ and $p(x)-q(x) < C <1$, we must have either $n(x)/A(x) \to 0$ or $n(x)/A(x) \to \infty$.

If $n(x)/A(x) \to 0$ as $x \to x_0$, then in particular $n(x) \to 0$, and  due to 
\begin{equation}
H = \sigma\sulut{A^2+n^2}^{q/2},
\end{equation}
we must have $\sigma(x) \to \infty$ as $x \to x_0$.
Thus, the condition $n(x)/A(x) \rightarrow 0$ results in the false candidate pair $(\bar{n},\bar{\sigma})$. 
\end{proof}

\begin{remark}
In general there is no hope of recovering the value of conductivity on the entire boundary based on the results of this paper only. One can construct an example where, on a part $\Gamma$ of the boundary, there are several points where $n^2 = A^2$ with non-vanishing $n$ and $A$ between. 
This will lead to two conductivity candidates on almost all of $\Gamma$.

Indeed, consider a Cauchy problem on a flat, open subset~$\Gamma$ of the boundary.
Suppose $p = q \equiv 2$.
We can then select the Dirichlet data so that $A \equiv 1$ and $\sigma(x) \nabla u(x) \cdot \nu = N(x) \equiv 1$ on~$\Gamma$.
Suppose the conductivity is analytic and oscillates around the value 1 on $\Gamma$, which means that $n(x) = N(x)/\sigma(x)$ also oscillates and is analytic.

By Cauchy-Kovalevski theorem~\cite[chapter 3]{John:1982} the Cauchy problem for the conductivity equation has a (possibly non-unique) solution~$u$ in a neighbourhood of~$\Gamma$.
We consider a domain~$\Omega$ contained in this neighbourhood and take $u|_{\doo \Omega}$ as Dirichlet values.
Then this problem has a unique solution with the desired boundary behaviour.

\end{remark}

\section{Algorithms and numerical experiments}
\label{sec:numeric}
\subsection{Algorithms}
\label{sec:algorithm}

The following two algorithms implement the boundary reconstruction result in two dimensions, $d=2$, and for the linear equation,  $p \equiv 2$.
The dimension simplifies the algorithm significantly, while the value of $p$ serves to simplify the simulation of the forward problem used in the tests.
In addition, the linear case is relevant for the currently known imaging modalities.

We consider the two physically motivated scenarios with $q \equiv 1$ and $q \equiv 2$.
The value~$p-q$ determines whether there is an explicit formula for the solutions and which of the three scenarios is the case, but otherwise does not significantly alter the algorithms.
Note that when implementing the following algorithms, due to floating point precision, equality between quantities should be interpreted as observing an absolute difference smaller than a predetermined precision level. 
We consider domains with closed boundaries, whence a cyclic ordering of the boundary points is implicit.
We also assume the existence of an interpolation algorithm represented by $\mathtt{interpolate}$. 

A matter of notation: We use $:=$ to denote assignment of values, and for ease of reading we write $A_j, N_j, H_j$ for the values $A(x_j), N(x_j), H(x_j)$ sampled at $M$ boundary points $x_j, j = 1,...,M$.

We begin with the simpler of the two problems, $d = p = 2$ and $q = 1$, corresponding to the CDII modality treated in corollary \ref{cor:p2q1}. 
In this modality, the algorithm consists of directly computing the estimates. 
It makes use of a priori upper and lower bounds on $\sigma$ as well as an index set $U$ containing the indices of undecided points.

\begin{algorithm}[Parameters: d = p = 2, q = 1]
\begin{algorithmic}
\label{alg:easy_alg}
\STATE{}
\STATE{\textbf{Input:} Bounds $\underline \sigma, \overline \sigma$ and samples $x_j, A_j, N_j, H_j, j = 1,...,M$. }
\FOR{$j = 1,...,M$}
\IF{$A_j \neq 0 \text{ and } \underline \sigma \leq  \real \left\{ \sqrt{H_j^2 - N_j^2 } \right\}/A_j \leq \overline \sigma$ }
\STATE{$\sigma_{\text{est}}(x_j) := \real \left\{ \sqrt{H_j^2 - N_j^2 } \right\}/A_j$}
\ELSE
\STATE{$U := U \cup \{ j \}$}
\ENDIF
\ENDFOR
\FOR{$j \in U$}
\STATE{$\sigma_{\text{est}}(x_j) = \mathtt{interpolate}\left( \left\lbrace \sigma_{\text{est}}(x_j)| j \in U^C\right\rbrace \right)$}
\ENDFOR
\end{algorithmic}
\end{algorithm}

In the next modality, AET/UMEIT, as discussed in corollary  \ref{cor:pq_2}, one must take care to select the correct candidate whenever possible, as outlined in section~\ref{sec:select}. 
We use the term \textit{double candidate} for points~$x_j$ where corollary~\ref{cor:pq_2} predicts a double root.
A double candidate is labelled \textit{undecided} if both $\sigma^+$ and $\sigma^-$ are admissible solutions or if neither is admissible.
Otherwise, we label it \textit{decided}.
Double candidates with two valid values (between lower and upper a~priori limits on $\sigma$) are undecideable on their own, yet it may be possible to use information from neighbouring points to pick a candidate.

If the double candidate $x_j$ is undecided, but its neighbours $x_{j-1}$ and $x_{j+1}$ agree on the use of either $\sigma^+$ or $\sigma^-$, it is reasonable to choose the value of $\sigma$ at $x_j$ accordingly.
We extend this logic to finite sequences of consecutive double candidates.
By searching in both directions from $x_j$ to find points $x_k$ and $x_l$ where we have made a choice of $\sigma^+$ or $\sigma^-$, we can decide whether to use $\sigma^+$ or $\sigma^-$ at all points between $x_k$ and $x_l$.

It is possible that the search encounters a point indicating a loss of information about which candidate to pick.
We call these points \textit{stopping points}, and terminate the search when encountering them.
The first type of stopping points $x_m$ consists of those where $H_m = 0$, i.e.\ it is impossible to compute a candidate for~$\sigma$. 
The second type of stopping point is encountered when passing points where $n^+ = n^-$.
Since $n^+$ and $n^-$ have the same sign, equality occurs when  $ \Delta n = \abs{n_+} - \abs{n_-}  = 0$.
By lemma~\ref{lem:n_difference}, $ \Delta n = \abs{n_+} - \abs{n_-}  \ge 0$.
Thus, $\Delta n$ is minimized at any point $x_m$ where $n^+ = n^-$, yet the converse is not true.
We must take into account that we have a finite amount of sampling points and so will probably miss the exact minimizer. 
We therefore consider a point $x_m$ which is a local minimum for the sequence $\{\Delta n(x_j)\}_{j=1}^M$ as a possible stopping point.
If, in addition, $\Delta n(x_m)$ is smaller than some predetermined threshold~$\epsilon$, it is considered a stopping point.

If the search ends in a stopping point $x_m$ in one direction and a decided point $x_d$ in the other, we set all choices of $\sigma^+$ or $\sigma^-$ at all points between $x_m$ and $x_d$ in accordance with $x_d$.

We use three index sets, $D, S$ and $U$, to label point indices as double candidates, stopping points and/or undecided, respectively.
\begin{algorithm}[Parameters: $d = p = q = 2$]
\begin{algorithmic}
\label{alg:hard_alg}
\STATE{}
\STATE{\textbf{Input:} Bounds $\underline \sigma, \overline \sigma, \epsilon$, measurements $x_j, A_j, N_j, H_j, j = 1,...,M$. }
\STATE{\textbf{Initialize:} $D := \emptyset, S := \emptyset, U := \emptyset$}
\FOR{$j = 1,...,M$}
\IF{$H_j= 0$ }
\STATE{$U := U \cup \{ j \}$}
\STATE{$S := S \cup \{ j \}$}
\ELSIF{$A_j \neq 0$ and $N_j = 0$ and $ \underline \sigma \leq H_j/A^2_j \leq \overline \sigma$}
\STATE{$\sigma_{\text{est}}(x_j) := H_j/A^2_j$}
\ELSIF{$A_j = 0$ and $N_j \neq 0$ and $ \underline \sigma \leq N^2_j/H_j \leq \overline \sigma$}
\STATE{$\sigma_{\text{est}}(x_j) := N^2_j/H_j$}
\ELSE
\STATE{$D := D \cup \{ j \}$}
\STATE{$\sigma^+_{\text{est}}(x_j) := \mathrm{Re}\left\lbrace2N_j^2/\left(H_j + \sqrt{(H_j^2 - 4A_j^2)N_j^2}\right)\right\rbrace$}
\STATE{$\sigma^-_{\text{est}}(x_j) := \mathrm{Re}\left\lbrace2N_j^2/\left(H_j - \sqrt{(H_j^2 - 4A_j^2)N_j^2}\right)\right\rbrace$}
\IF{$\sigma^+_{\text{est}}(x_j) = \sigma^-_{\text{est}}(x_j) \, \mathrm{ and } \, \underline \sigma \leq \sigma^+_{\text{est}}(x_j) \leq \overline \sigma$ }
\STATE{$\sigma_{\text{est}}(x_j) := \sigma^+_{\text{est}}(x_j)$}
\ELSIF{$(\sigma^-_{\text{est}}(x_j) < \underline \sigma \, \mathrm{ or } \, \sigma^-_{\text{est}}(x_j) > \overline \sigma) \, \mathrm{ and } \, \underline \sigma \leq \sigma^+_{\text{est}}(x_j) \leq \overline \sigma$}
\STATE{$\sigma_{\text{est}}(x_j) := \sigma^+_{\text{est}}(x_j)$}
\ELSIF{$(\sigma^+_{\text{est}}(x_j) < \underline \sigma \, \mathrm{ or } \, \sigma^+_{\text{est}}(x_j) > \overline \sigma) \, \mathrm{ and } \, \underline \sigma \leq \sigma^-_{\text{est}}(x_j) \leq \overline \sigma$}
\STATE{$\sigma_{\text{est}}(x_j) := \sigma^-_{\text{est}}(x_j)$}
\ELSE
\STATE{$U := U \cup \{ j \}$}
\ENDIF
\ENDIF
\IF{$\Delta n(x_j) < \Delta n(x_{j+1}) \text{ and } \Delta n(x_j) < \Delta n(x_{j-1}) \text{ and } |\Delta n(x_j)| < \epsilon$ }
\STATE{$S := S \cup \{ j \}$}
\ENDIF
\ENDFOR
\FOR{$j \in (D \cap U) \backslash S $}
\STATE{$k := \max\{i \in \mathbb{N} |i < j \text{ and } i \in S \text{ or } i \in D \cap U^C \}$}
\STATE{$l := \min\{i \in \mathbb{N} |i > j \text{ and } i \in S \text{ or } i \in D \cap U^C \}$}
\IF{$\left\lbrace
\begin{aligned}
&k \in U^C ,& l  \in U^C&;& \sigma_{\text{est}}(x_k) = \sigma^-_{\text{est}}(x_k) &,\, \sigma_{\text{est}}(x_l) = \sigma^-_{\text{est}}(x_l)\\
\text{or } &l \in U^C,& k \in U&;& \sigma_{\text{est}}(x_l) &= \sigma^-_{\text{est}}(x_l)\\
\text{or } &k \in U^C,& l \in U&;& \sigma_{\text{est}}(x_k) &= \sigma^-_{\text{est}}(x_k)
\end{aligned}
\right\rbrace$}
\STATE{$\sigma_{\text{est}}(x_j) := \sigma^-_{\text{est}}(x_j)$ for $j \in \{k+1,...,l-1 \}$}
\STATE{$U := U \backslash \{k+1,...,l-1 \}$}
\ELSIF{$\left\lbrace
\begin{aligned}
&k \in U^C ,&l  \in U^C&;& \sigma_{\text{est}}(x_k) = \sigma^+_{\text{est}}(x_k)&,\, \sigma_{\text{est}}(x_l) = \sigma^+_{\text{est}}(x_l)\\
\text{or } &l \in U^C,&k \in U&;& \sigma_{\text{est}}(x_l) &= \sigma^+_{\text{est}}(x_l)\\
\text{or } &k \in U^C,&l \in U&;& \sigma_{\text{est}}(x_k) &= \sigma^+_{\text{est}}(x_k)
\end{aligned}
\right\rbrace$}
\STATE{$\sigma_{\text{est}}(x_j) := \sigma^+_{\text{est}}(x_j)$ for $j \in \{k+1,...,l-1 \}$}
\STATE{$U := U \backslash \{k+1,...,l-1 \}$}
\ENDIF
\ENDFOR
\FOR{$j \in U$}
\STATE{$\sigma_{\text{est}}(x_j) = \mathtt{interpolate}\left( \left\lbrace \sigma_{\text{est}}(x_j)| j \in U^C\right\rbrace \right)$}
\ENDFOR
\end{algorithmic}
\end{algorithm}

\begin{remark}[Multiple measurements]
We have only considered the situation of a single measurement.
It is not immediately obvious how to combine several such measurements in a principled way.
We would recommend first doing the reconstruction for all points where it can be done uniquely, taking averages of different reconstructions when several are available, and then checking if some candidates for the conductivity are approximately equal in all reconstructions.
The reconstructions of the true conductivities should be roughly equal, whereas the false candidates need not be.
\end{remark}

\subsection{Numerical experiments}
We demonstrate algorithm~\ref{alg:hard_alg} by reconstructing conductivity on the boundary from numerically simulated AET data.
We consider algorithm~\ref{alg:hard_alg} because it should contain most of the difficulties that algorithm~\ref{alg:easy_alg} contains.

We implemented the code in MATLAB and executed it using MATLAB (2018b release) running on a mid-2014 MacBook Pro.
The code is available on GitHub\footnote{\url{https://github.com/tringholm/bdry-data-calderon}}.
We synthesized test measurement data for $A(x)$, $N(x)$ and $H(x)$ by solving the linear conductivity equation, equation \eqref{eq:linear_cond_eqn}, with given conductivity and boundary data.
To do so, we used the finite element solver with quadratic elements from MATLAB's Partial Differential Equation toolbox.
We used a triangulated unit disk as the domain~$\Omega$.
The inverse crime~\cite[chapter~2]{Mueller:Siltanen:2012} was avoided by sampling boundary at points independent of the finite element grid.
We sampled uniformly at $M$ points along the unit circle in a counter-clockwise fashion.

We added noise to several measured quantities to assess the robustness of the algorithm.
\begin{itemize}
\item Uncertainty in measurement location was simulated using additive Gaussian noise with variance $2\pi/M \cdot 5\%$ in the angular position of the measurement points, i.e. measuring at slightly imprecise angles.
\item The Neumann measurements $N_i$ were subjected to additive Gaussian noise with variance $N_i \cdot 5\%$.
\item The current density measurements $H_i$ were subjected to additive Gaussian noise with variance $H_i \cdot 5\%$.
\end{itemize}

Figures \ref{fig:easy_figure_1000pts}, \ref{fig:easy_figure_100pts} and \ref{fig:hard_figure_100pts} illustrate the reconstruction steps of algorithm~\ref{alg:hard_alg}. 
First we determine non-double candidate points. Next, we add unambiguous double points, i.e. double candidates where one candidate breaks the prior bounds. 
After this, we choose a value for remaining undecided double points if possible by the inferment procedure described before algorithm~\ref{alg:hard_alg}. 
Then, undecideable points are interpolated using linear interpolation. 
Finally, we post-process the reconstructed data by applying a Gaussian smoothing and compare with the exact solution.
In figures \ref{fig:easy_figure_1000pts} and \ref{fig:easy_figure_100pts}, a conductivity of 
\begin{align}
\sigma(x) = \dfrac{3}{1 + \mathrm{e}^{2(x_1+x_2)}}
\end{align}
was used, with reconstruction using $M = 1000$ and $M = 100$ samples, respectively.
The last figure shows a test of the reconstruction procedure using $M = 100$ samples with a more oscillatory conductivity,
\begin{align}
\sigma(x) = 2 + \cos(10(x_1-x_2)).
\end{align}
In both cases, the boundary data was chosen as 
\begin{align*}
u|_{\partial \Omega}(x) = \max(0,x_1).
\end{align*}
to include a part of the boundary with $A = 0$ such that we could observe the different cases present in the algorithm.

\newpage
\begin{figure}
\includegraphics[width=\figscale\textwidth]{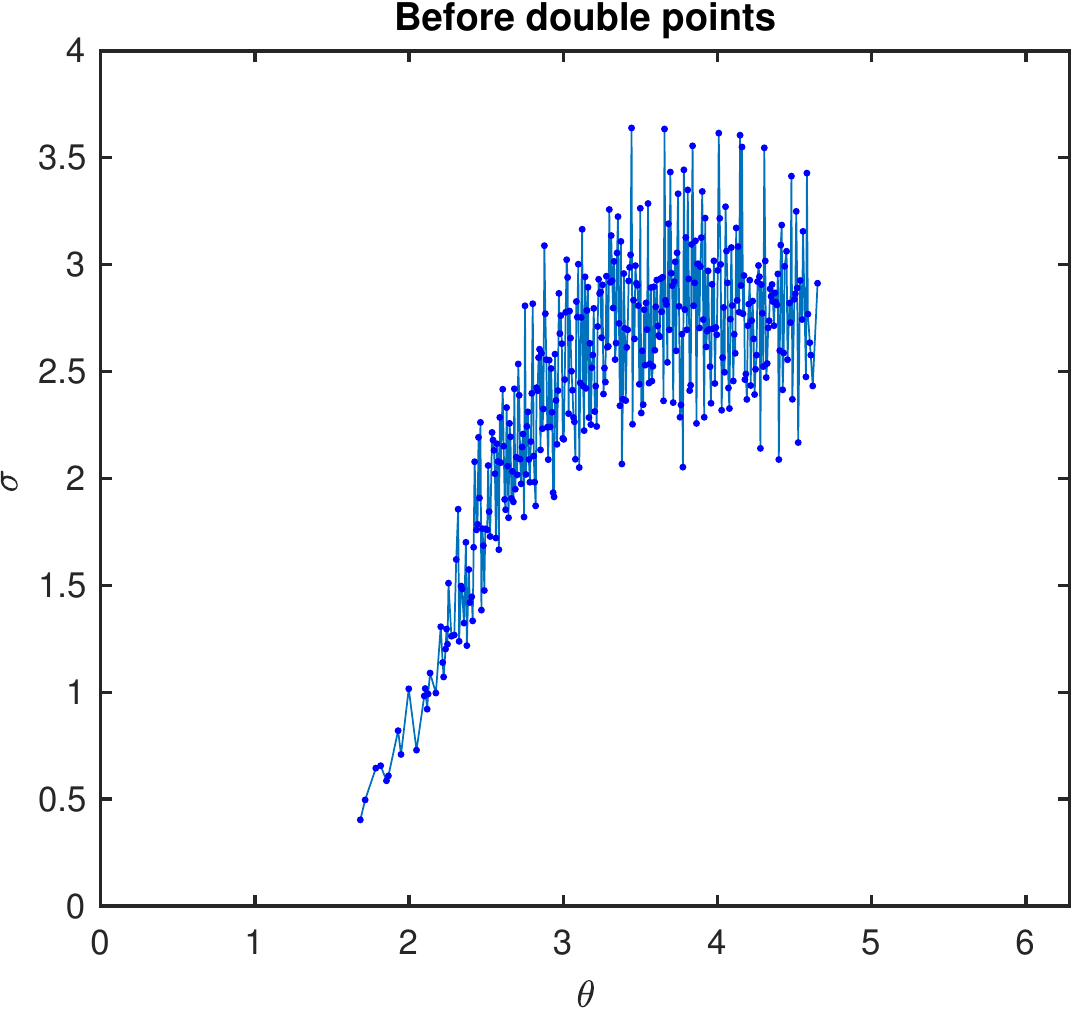}
\includegraphics[width=\figscale\textwidth]{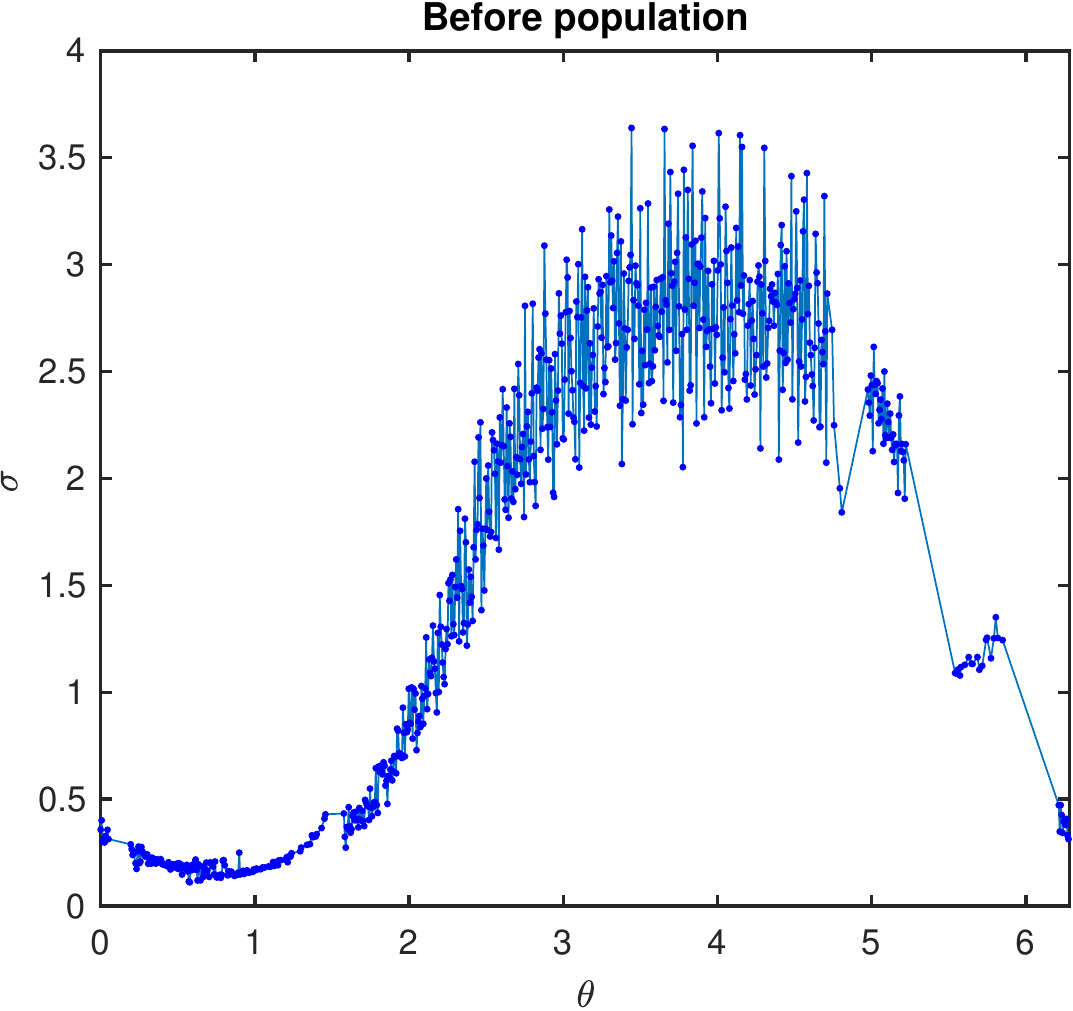}\\
\includegraphics[width=\figscale\textwidth]{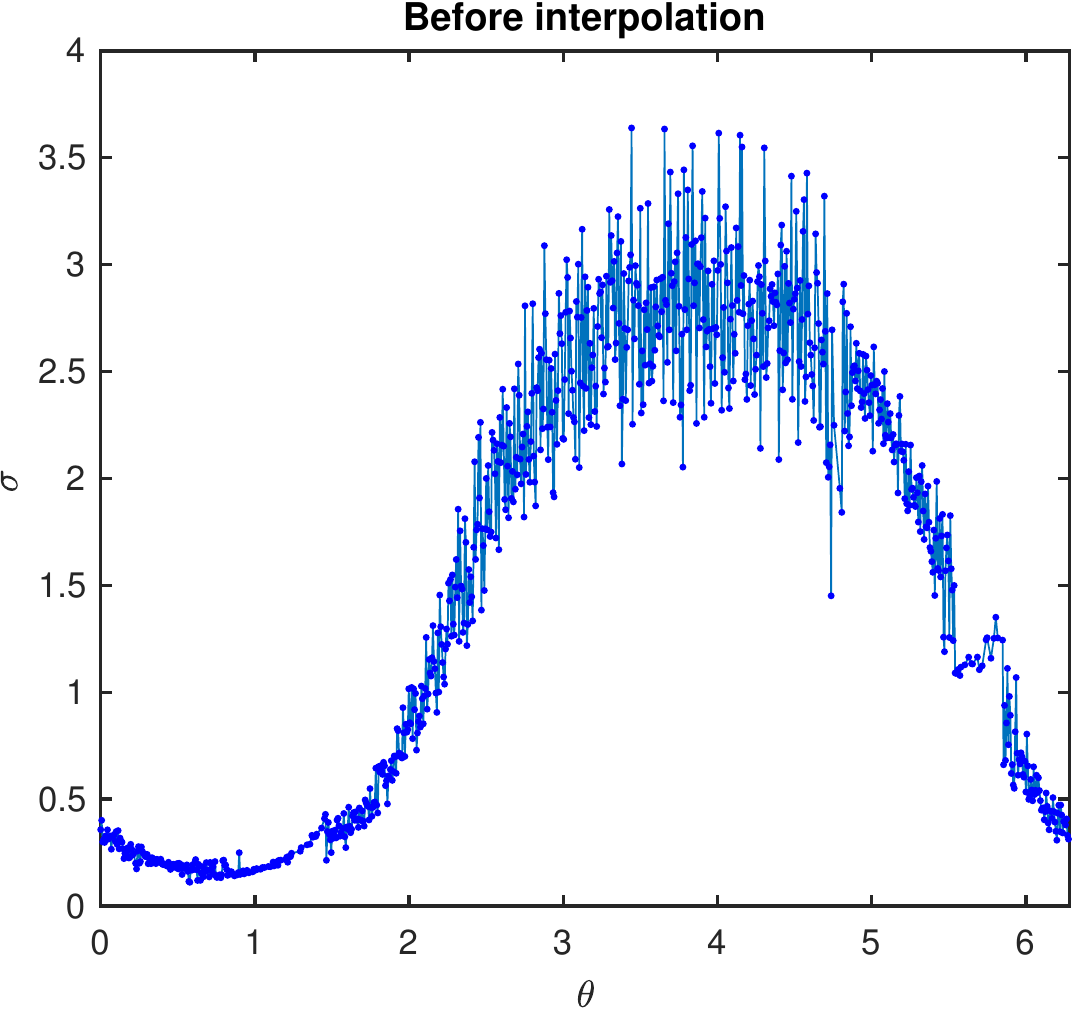}
\includegraphics[width=\figscale\textwidth]{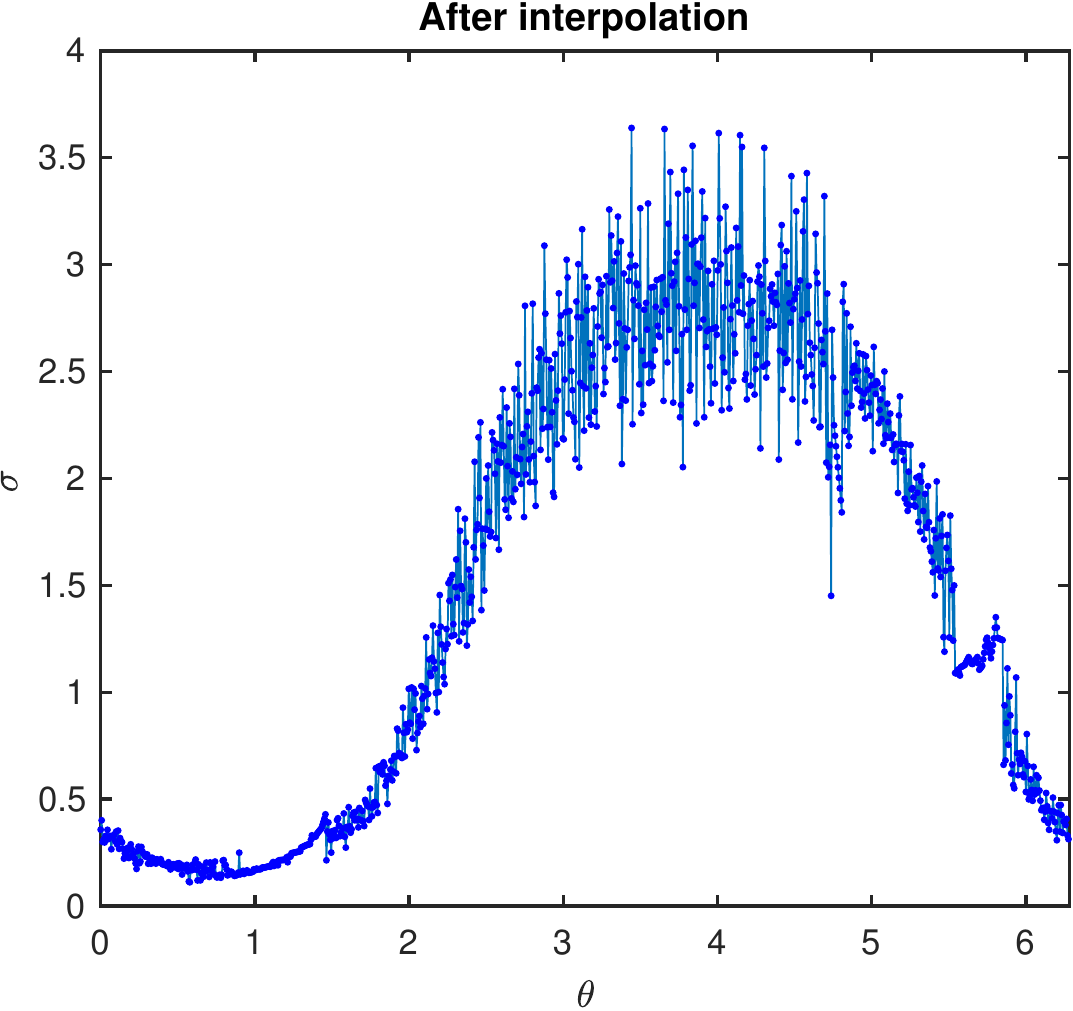}\\
\includegraphics[width=\figscale\textwidth]{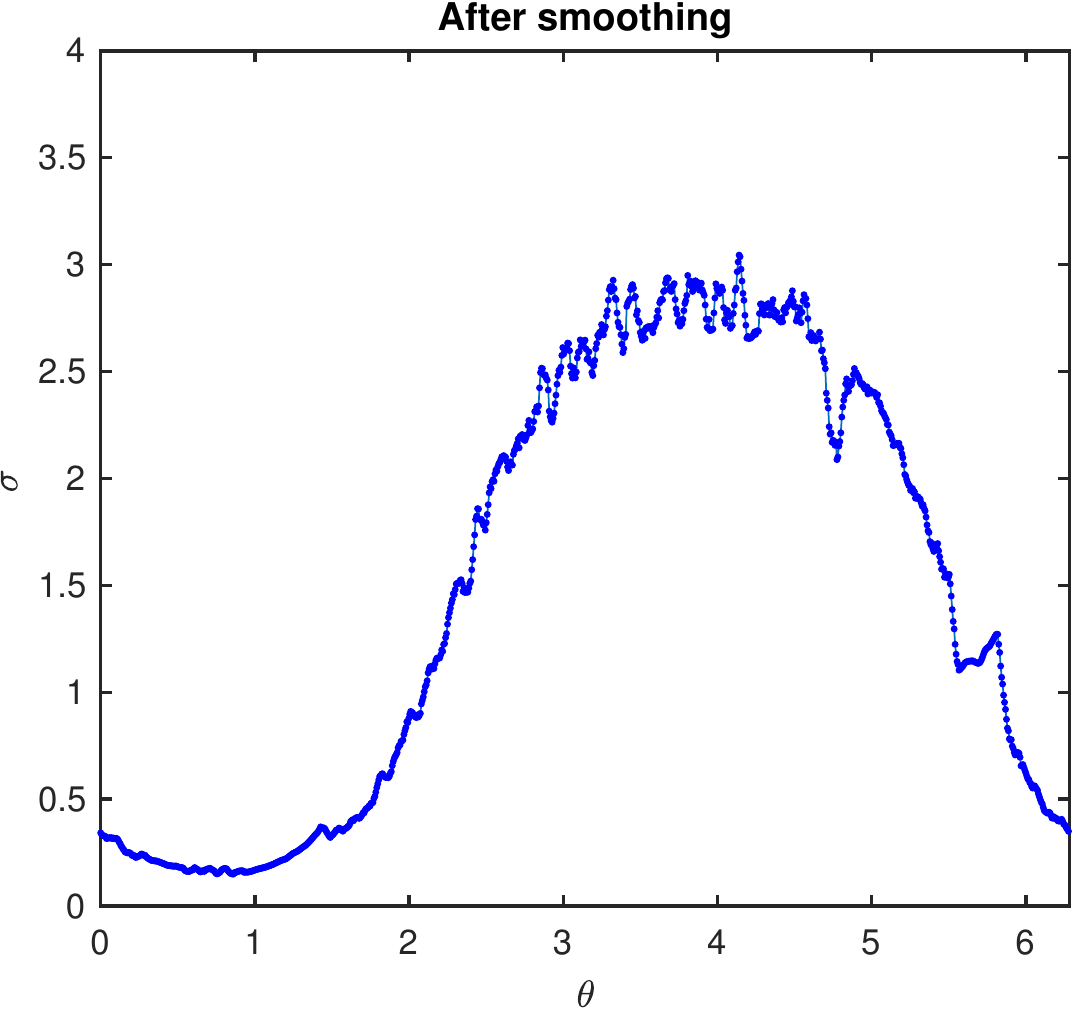}
\includegraphics[width=\figscale\textwidth]{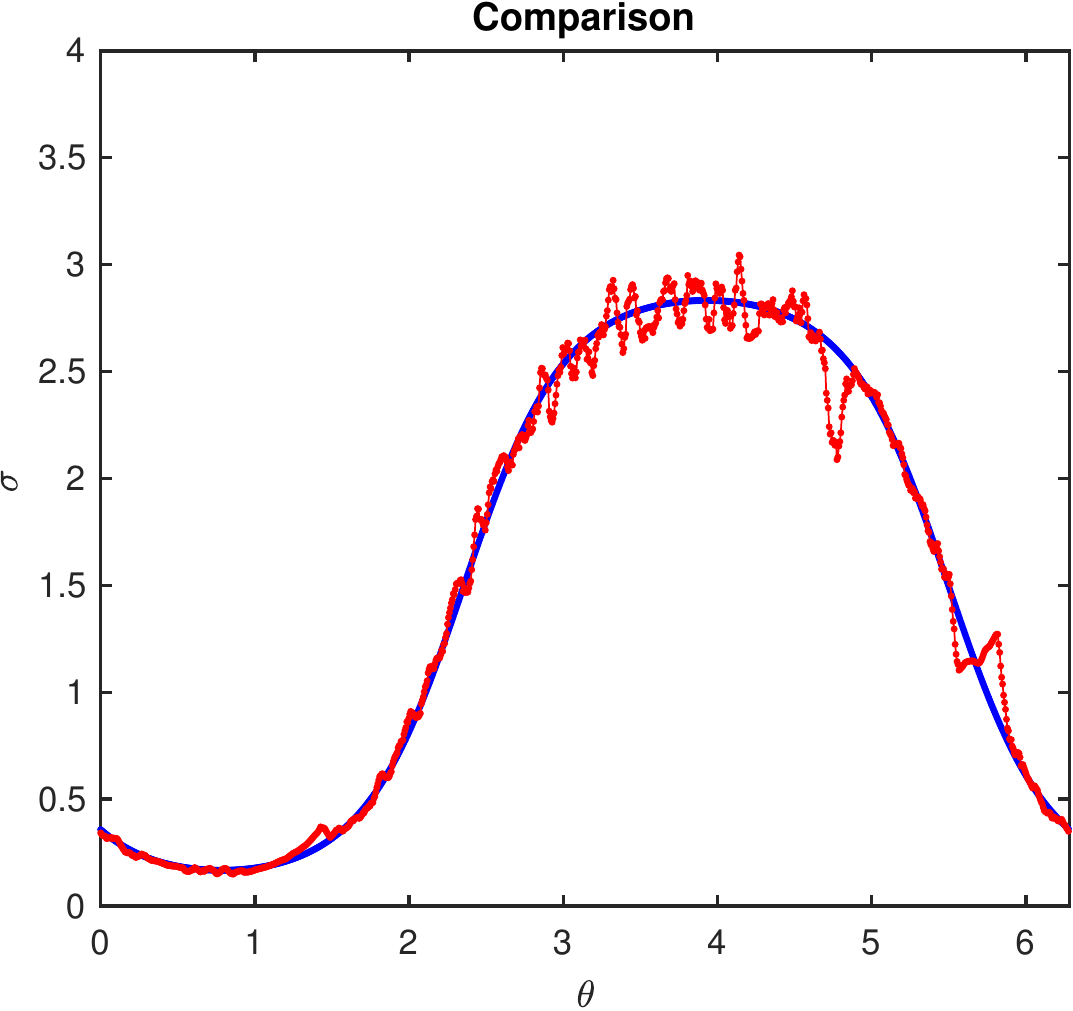}
\caption{Signal reconstruction using 1000 samples. }
\label{fig:easy_figure_1000pts}
\end{figure}

\begin{figure}
\includegraphics[width=\figscale\textwidth]{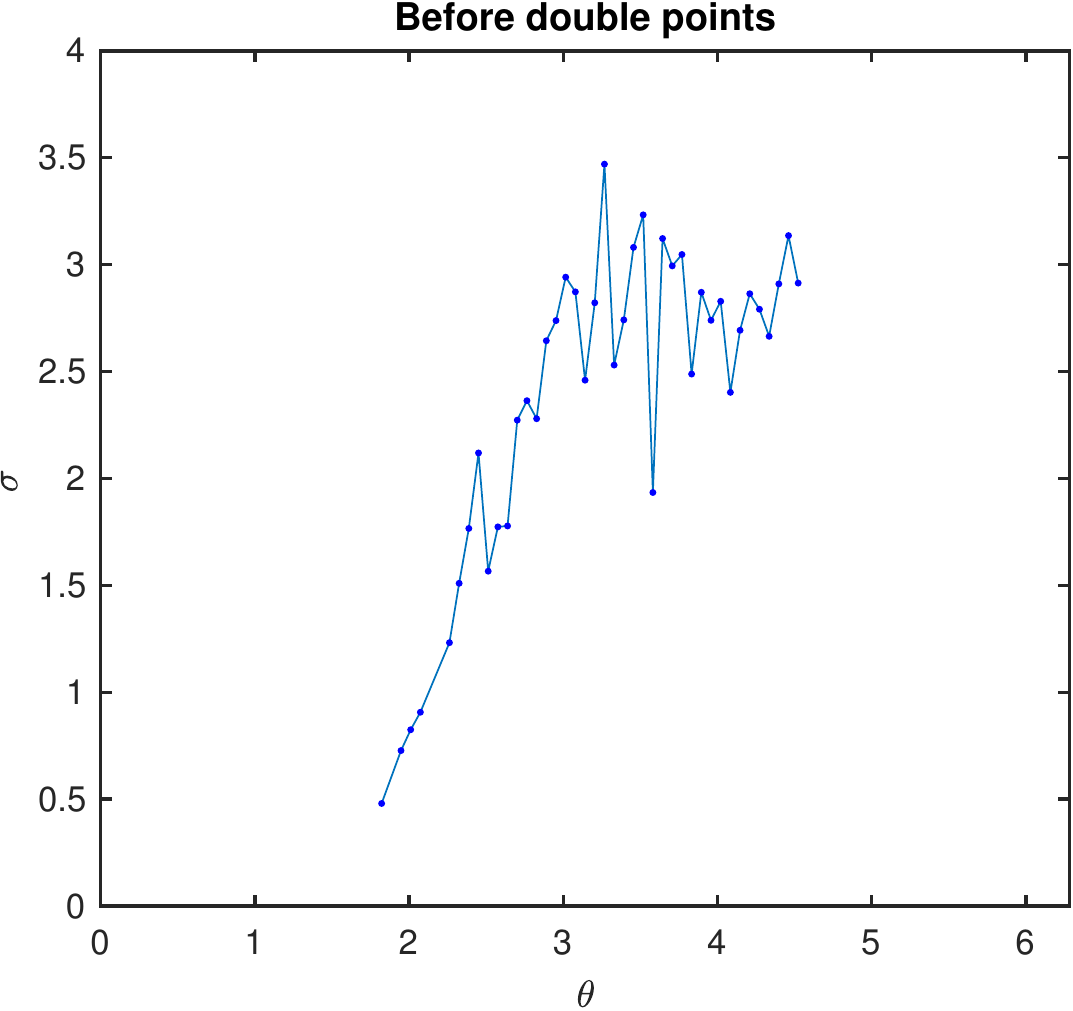}
\includegraphics[width=\figscale\textwidth]{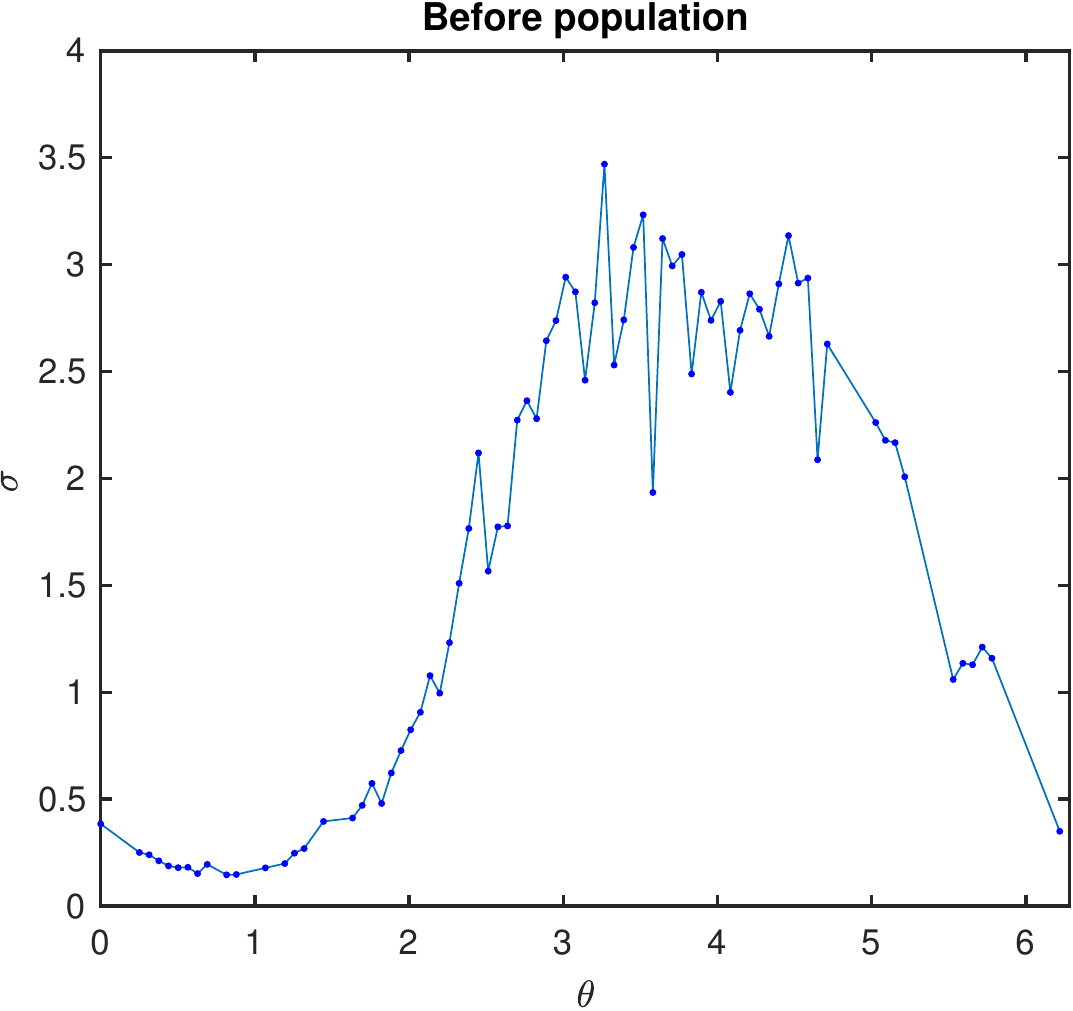}\\
\includegraphics[width=\figscale\textwidth]{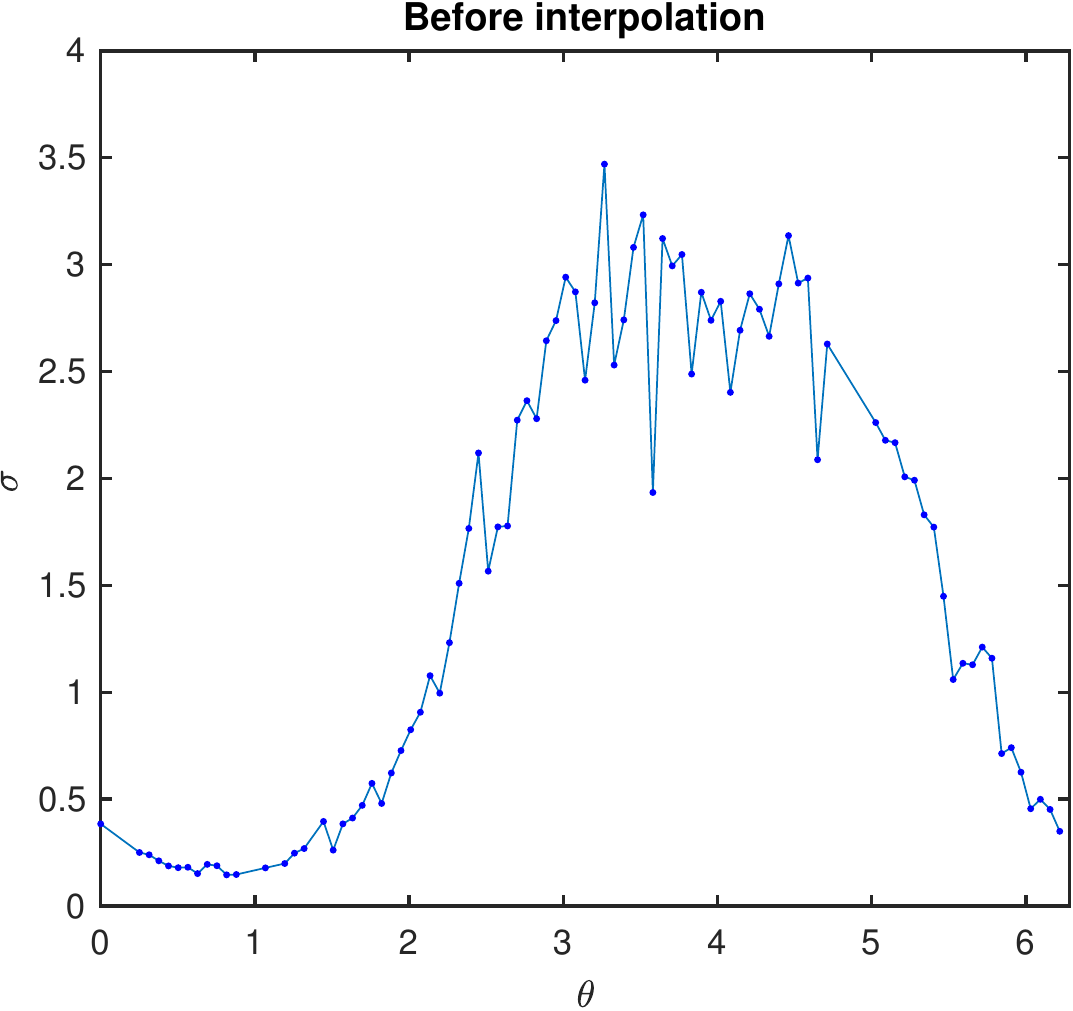}
\includegraphics[width=\figscale\textwidth]{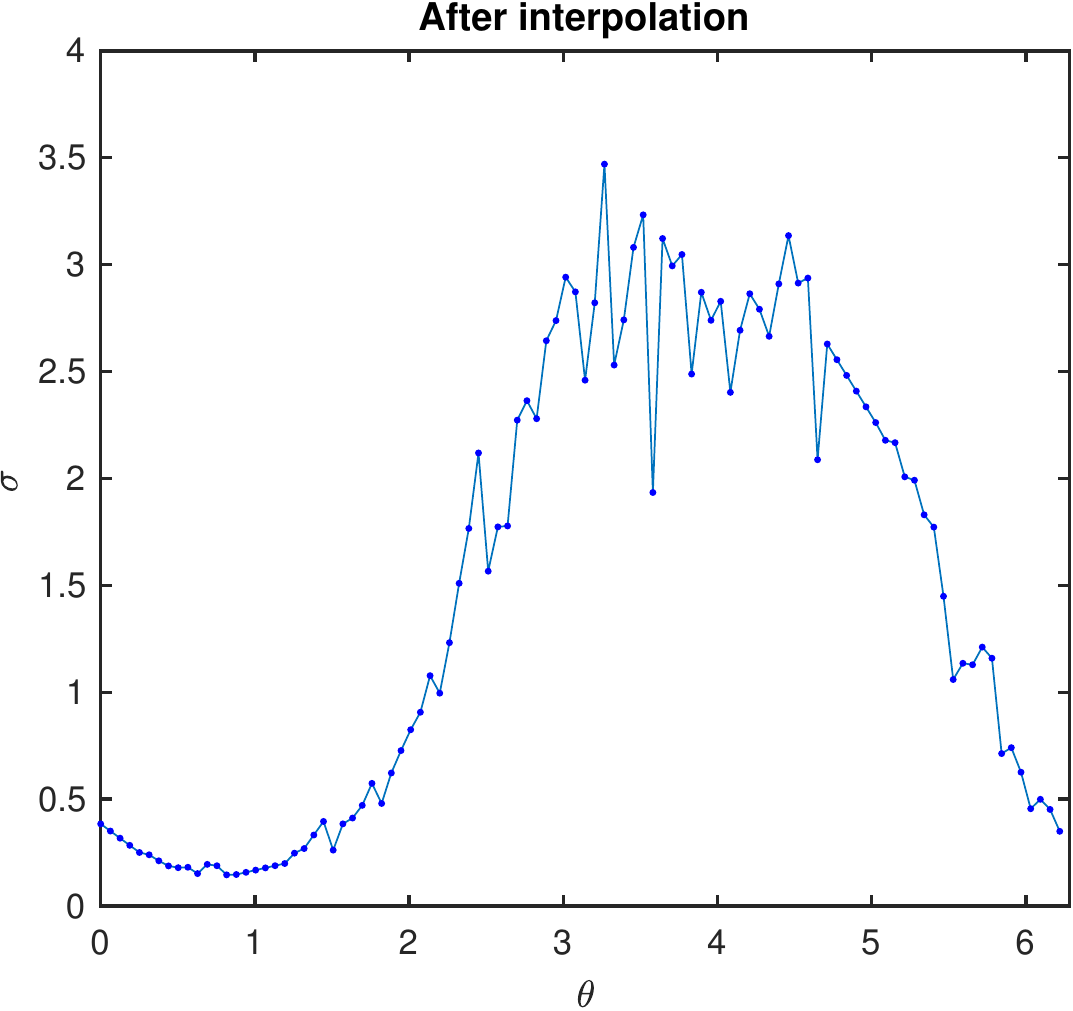}\\
\includegraphics[width=\figscale\textwidth]{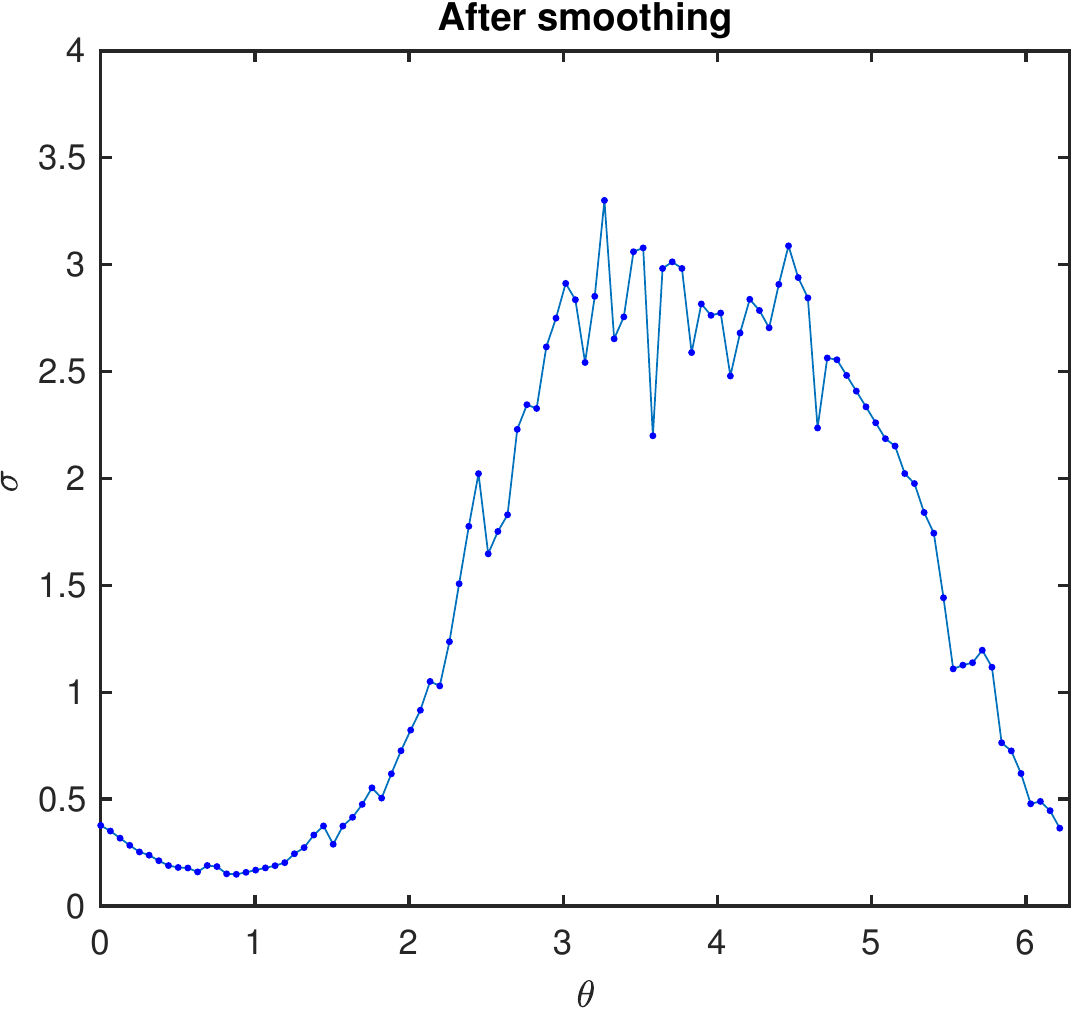}
\includegraphics[width=\figscale\textwidth]{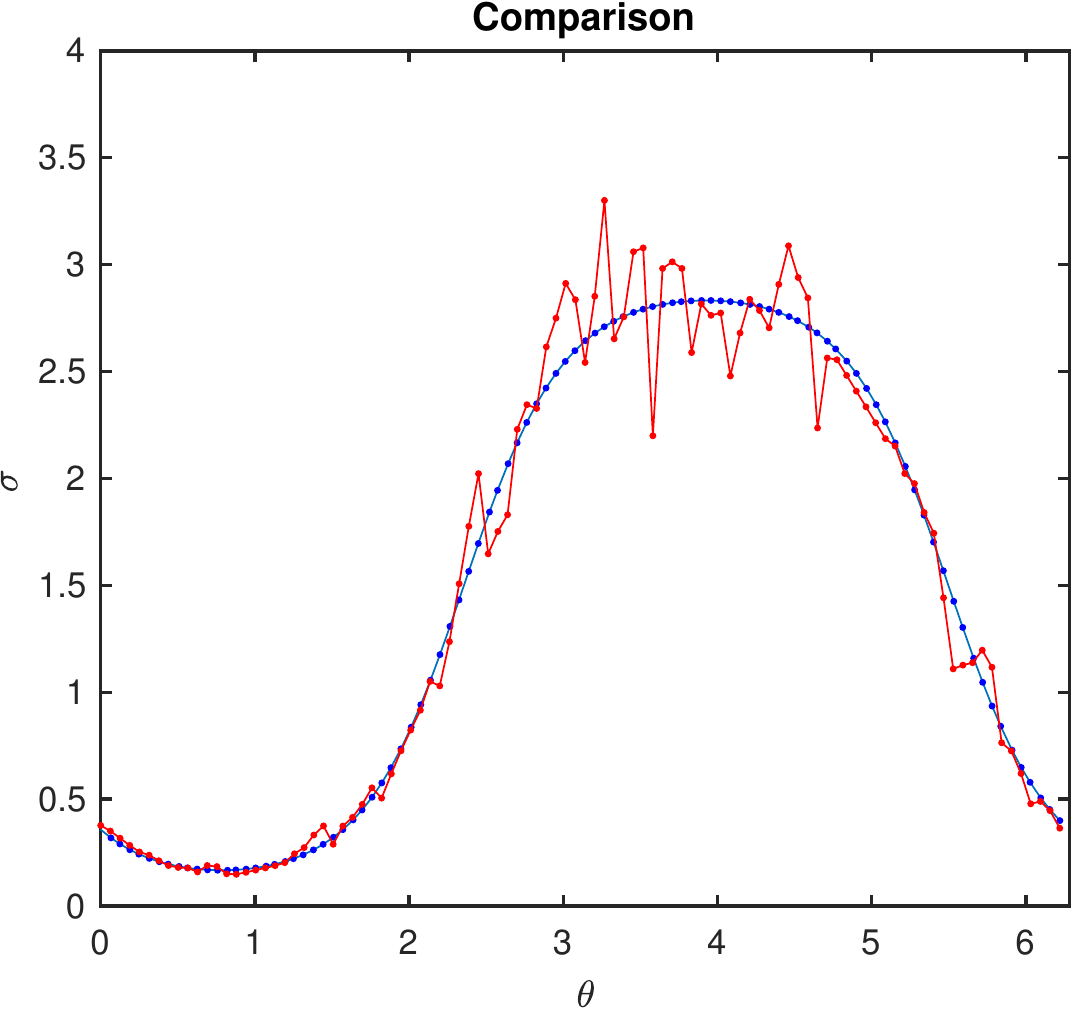}
\caption{Signal reconstruction using 100 samples.}
\label{fig:easy_figure_100pts}
\end{figure}


\begin{figure}
\includegraphics[width=\figscale\textwidth]{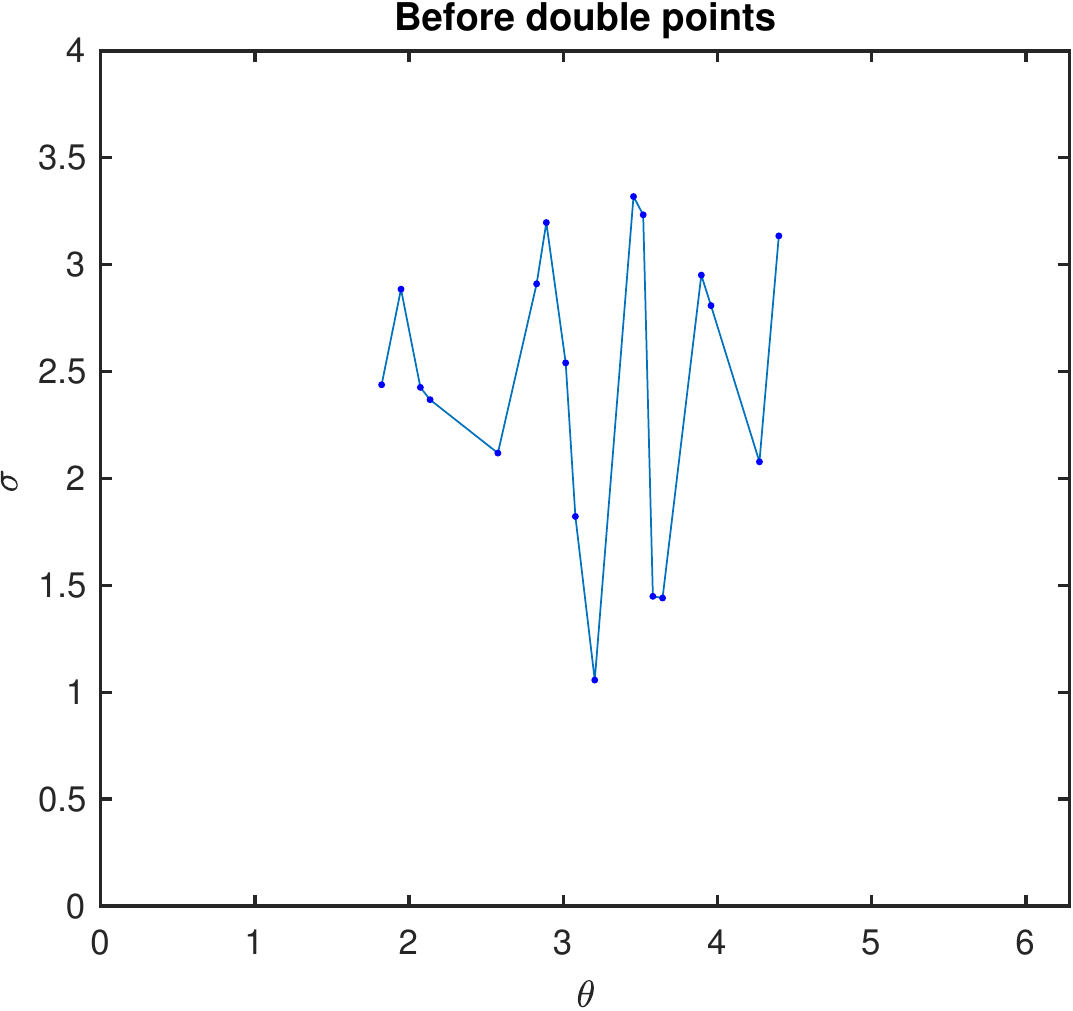}
\includegraphics[width=\figscale\textwidth]{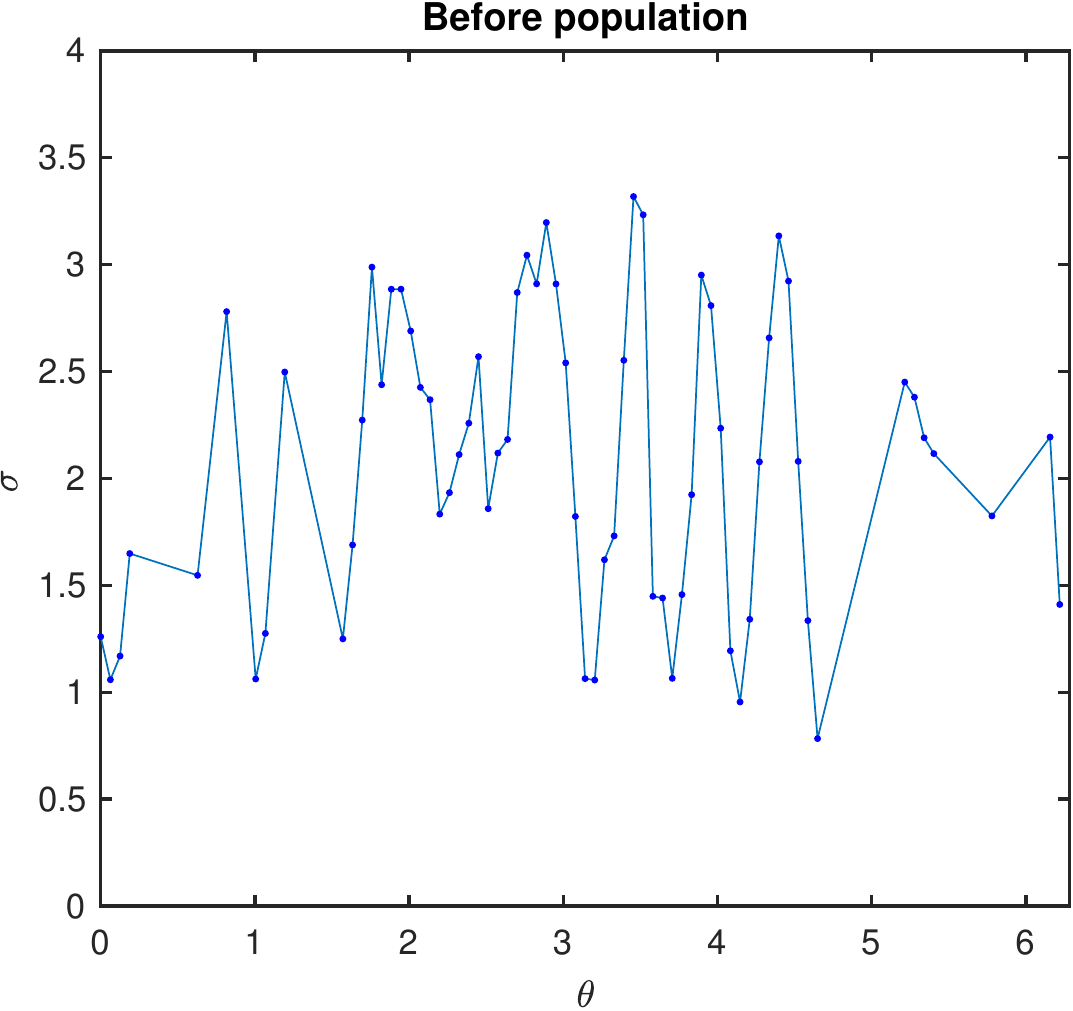}\\
\includegraphics[width=\figscale\textwidth]{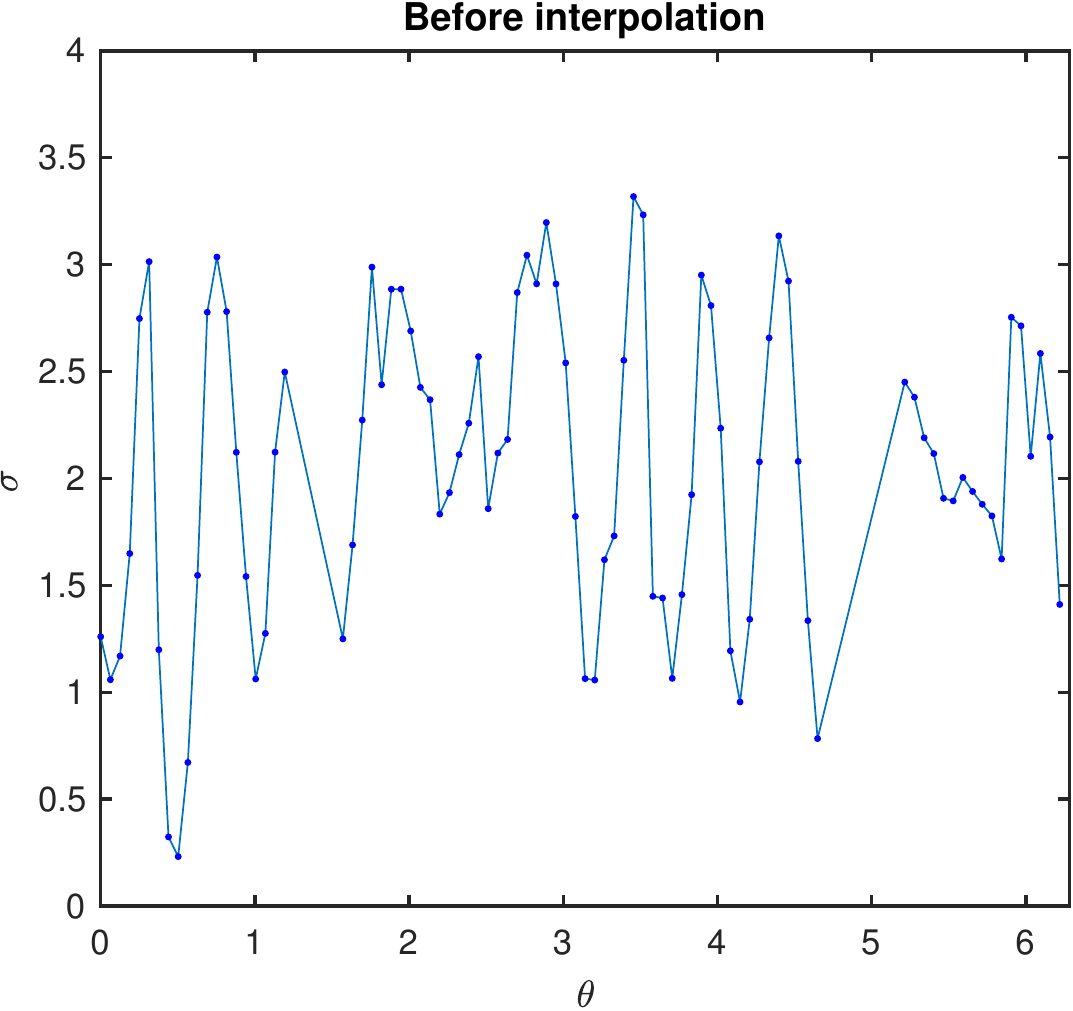}
\includegraphics[width=\figscale\textwidth]{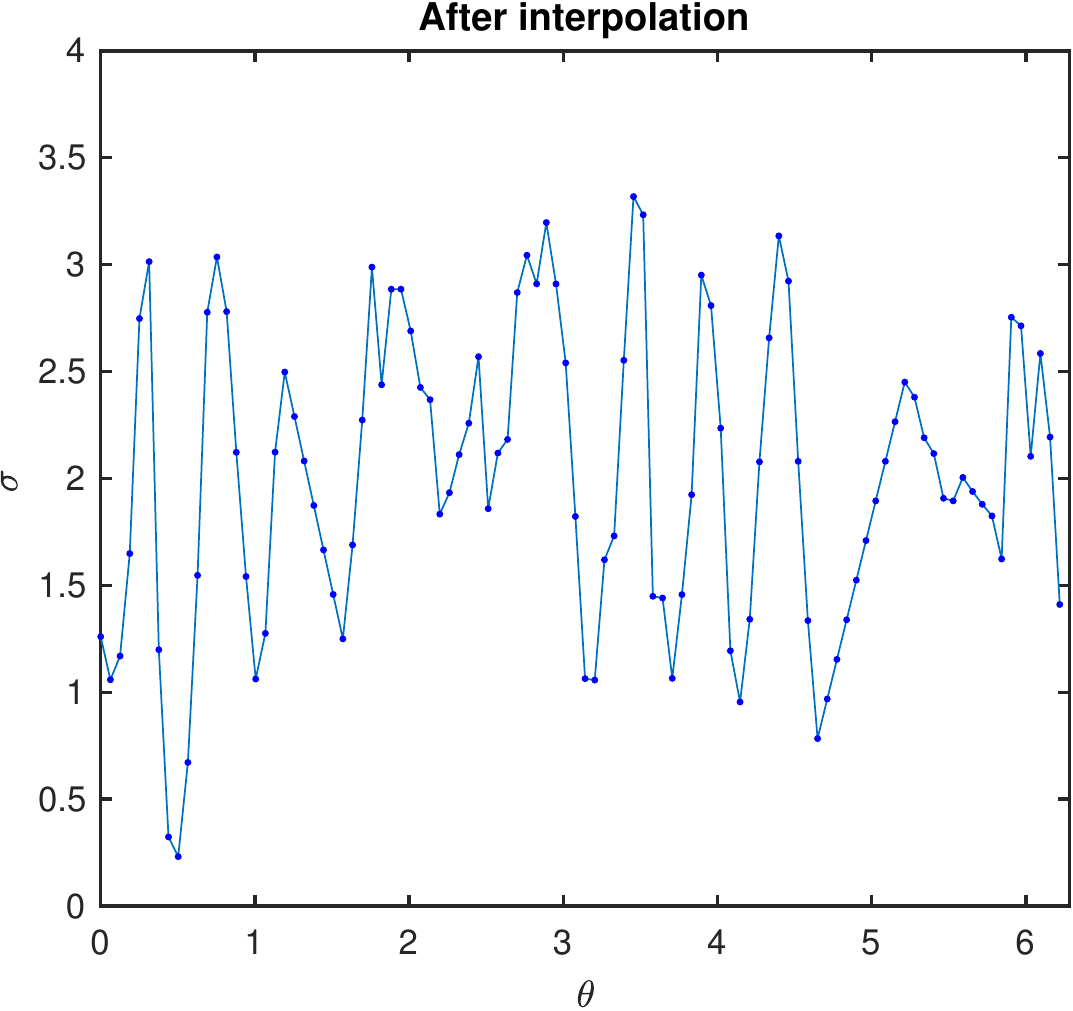}\\
\includegraphics[width=\figscale\textwidth]{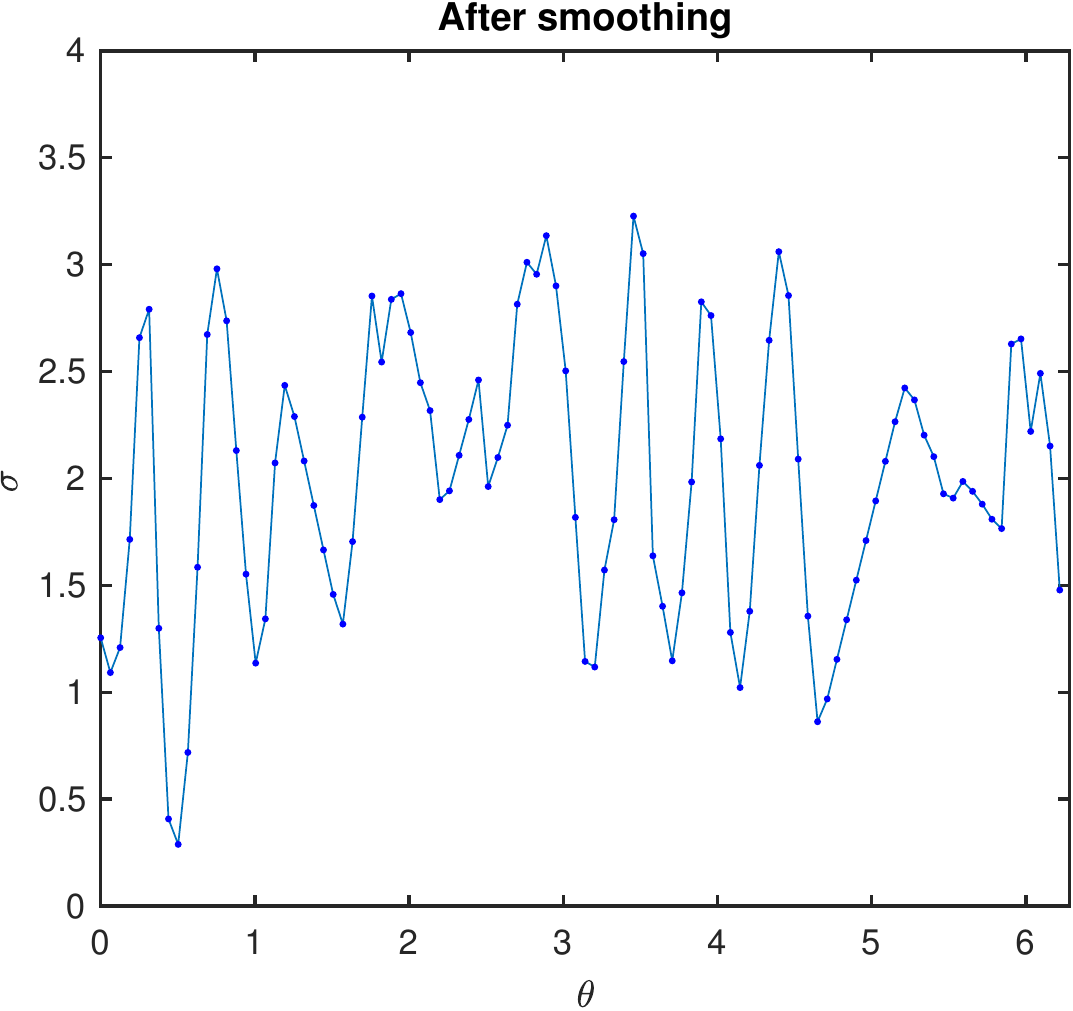}
\includegraphics[width=\figscale\textwidth]{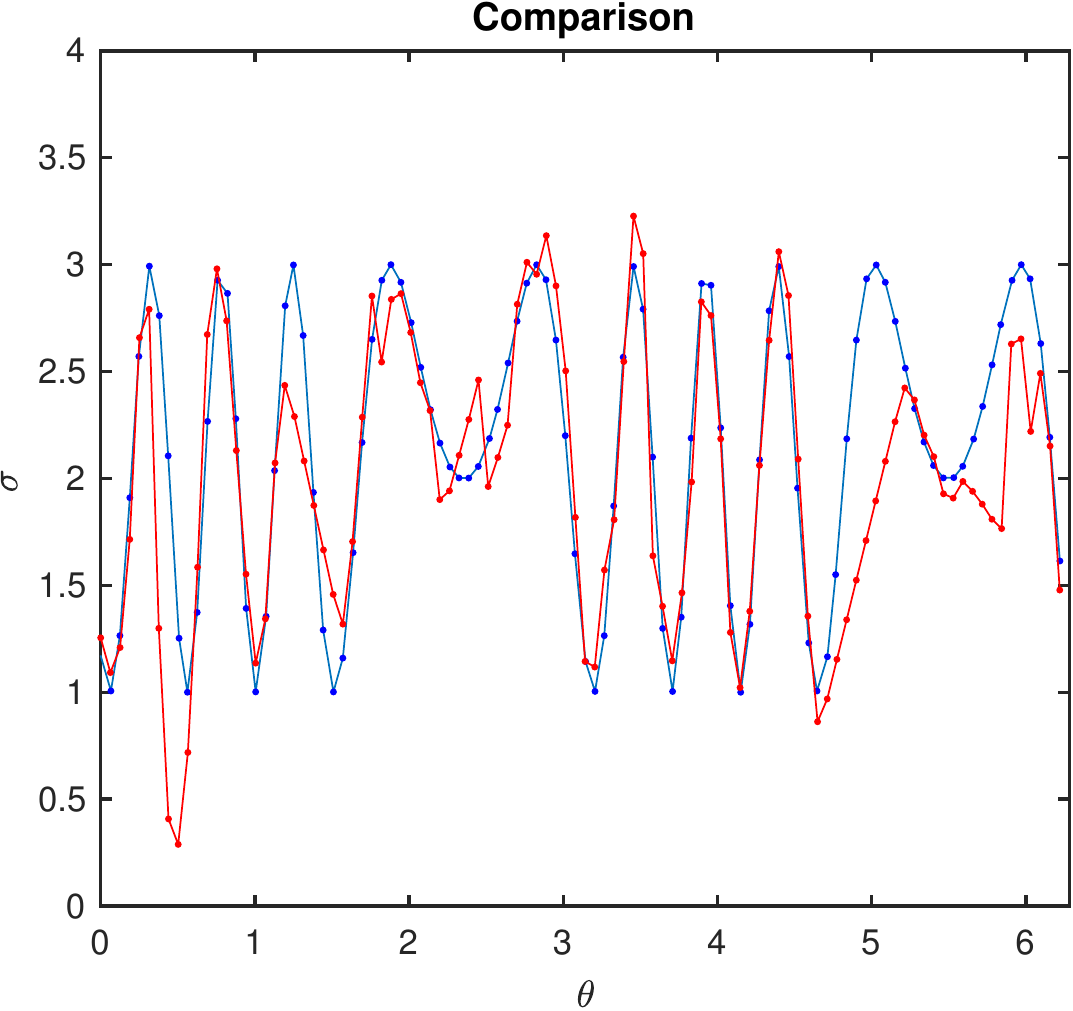}
\caption{Signal reconstruction using 100 samples, oscillatory conductivity.}
\label{fig:hard_figure_100pts}
\end{figure}

\clearpage

\bibliographystyle{plain}
\bibliography{math}

\end{document}